\documentclass[12pt]{article}
  \oddsidemargin=0pt
\evensidemargin=0pt \setlength{\textheight}{620pt}
\setlength{\textwidth}{440pt}
\usepackage[english]{babel}
\usepackage[square,numbers]{natbib}
\bibliographystyle{abbrvnat}
\usepackage{amscd,float}
\usepackage{latexsym,amsmath,amssymb,amsbsy, amsthm}

\usepackage[colorlinks = TRUE, linkcolor = blue, citecolor = blue,
urlcolor = blue]{hyperref}
\usepackage{setspace}
\usepackage{lscape,fancyhdr,fancybox}
\usepackage{graphicx,epsfig, xy}
\usepackage{color}
\usepackage[hmarginratio=1:1, vmarginratio =5:5,
textheight=22cm,bindingoffset=1.5cm, textwidth=14.6cm]{geometry}
\usepackage{float}
\usepackage{graphicx}
\usepackage{caption}
\usepackage{subcaption}




\newcommand{\comment}[1]{}

\numberwithin{equation}{section}

\newtheorem{theorem}{Theorem}[section]
\newtheorem{lemma}{Lemma}[section]
\newtheorem{proposition}{Proposition}[section]

\numberwithin{equation}{section}

\DeclareMathOperator{\Var}{Var}

\newtheorem{remark}{Remark}[section]

\theoremstyle{definition}
\newtheorem{definition}{Definition}[section]

\usepackage{amsfonts}

\newcommand{\Cov}{\text{Cov}}

\usepackage{txfonts,enumerate}

\DeclareMathOperator{\E}{E} 
\DeclareMathOperator{\cov}{Cov}

\DeclareMathOperator{\Tr}{Tr}

\newcommand{\beq}{\begin{eqnarray}}
\newcommand{\eeq}{\end{eqnarray}}
\newcommand{\ben}{\begin{eqnarray*}}
\newcommand{\een}{\end{eqnarray*}}

\title{Fluctuation of the free energy of Sherrington-Kirkpatrick model with Curie-Weiss interaction: the paramagnetic regime}
 \author{
\sc Debapratim Banerjee
 \\ \small Dept. of Statistics\\ University of Pennsylvania\\ dban@wharton.upenn.edu\\}
\begin{document}
 \maketitle
\begin{abstract}
We consider a spin system containing pure two spin Sherrington-Kirkpatrick Hamiltonian with Curie-Weiss interaction. The model where the spins are spherically symmetric was considered by \citet{Baiklee16} and \citet{Baikleewu18} which shows a two dimensional phase transition with respect to temperature and the coupling constant. In this paper we prove a result analogous to \citet{Baiklee16} in the ``paramagnetic regime" when the spins are i.i.d. Rademacher. We prove the free energy in this case is asymptotically Gaussian and can be approximated by a suitable linear spectral statistics. Unlike the spherical symmetric case the free energy here can not be written as a function of the eigenvalues of the corresponding interaction matrix. The method in this paper relies on a dense sub-graph conditioning technique introduced by \citet{Ban16}. The proof of the approximation by the linear spectral statistics part is close to \citet{Banerjee2017}.
\end{abstract}
\section{Introduction}

\subsection{The model description}
We at first give the description of the model. We start with a symmetric matrix $A=\left( A_{i,j} \right)_{i,j=1}^{n}$ where the entries in the strict upper triangular part of $A$ are i.i.d. standard Gaussian and for simplicity one might take $A_{i,i}=0$. The Hamiltonian corresponding to the Sherrington-Kirkpatrick model without any external field is given by 
\begin{equation}\label{SK:hamiltonian}
H_{n}^{SK}(\sigma):= \frac{1}{\sqrt{n}}\langle \sigma, A\sigma  \rangle =\frac{1}{\sqrt{n}} \sum_{i,j} A_{i,j} \sigma_{i} \sigma_{j}= \frac{2}{\sqrt{n}} \sum_{1\le i< j \le n} A_{i,j} \sigma_{i}\sigma_{j}.
\end{equation} 
Here $\sigma_{i}$'s are called spins and in this paper we shall only consider the case when $\sigma_{i} \in \{ -1, 1 \}$ for each $i$. In particular, one might consider the case when the spins $\sigma_{i}$'s are i.i.d. Rademacher random variables. This is known as the classical Sherrington- Kirkpatrick model. This model has got significant amount of interest in the study of spin glasses over the last few decades. Celebrated result like the proof of Parisi formula is considered one of the major advancements in this field. One might look at \citet{Panbook}, \citet{Tal05} for some information in this regard.

However the main focus of this paper is the following Hamiltonian 
\begin{equation}\label{our:hamiltonian}
H_{n}(\sigma):= H_{n}^{SK}(\sigma)+ H_{n}^{CW}(\sigma)
\end{equation}
where the Curie-Weiss Hamiltonian with coupling constant $J$ is defined by 
\begin{equation}\label{cw:hamiltonian}
H_{n}^{CW}(\sigma):= \frac{J}{n} \sum_{i,j=1}^{n} \sigma_{i}\sigma_{j} = \frac{J}{n}\left( \sum_{i=1}^{n} \sigma_{i} \right)^2.
\end{equation}
 Note that model corresponding to the Hamiltonian $H_{n}^{CW}(\sigma)$ is a simple model which can be studied in considerable details. One might look at \citet{54} for a reference. 
 
This Hamiltonian in \eqref{our:hamiltonian} was defined in \citet{Talbook}(see (4.22)) as a model which have difficulties of the ferromagnetic interactions however with a familiar disorder. This model was introduced as a prelude to study the Hopfield model in \citet{Talbook}. 
  

The main result of this paper is  a limit theorem for the free energy corresponding to the Hamiltonian $H_{n}(\sigma)$ when $\beta< \frac{1}{2}$ and $\beta J < \frac{1}{2}$ whenever $\sigma_{i}$'s are i.i.d. Rademacher variables. If the spins $\mathbf{\sigma}=(\sigma_{1},\ldots,\sigma_{n})$ are distributed according to the uniform measure on the sphere $S_{n-1}$ where $S_{n-1}:= \left\{ \sigma \in \mathbb{R}^{n}~|~ ||\sigma||^2=n \right\}$, then the analogous Hamiltonian was considered in \citet{Baiklee16} and \citet{Baikleewu18}. However the results in \citet{Baiklee16} are much more general than the current paper in the sense they are able to consider any $\beta>0,J>0$. Depending on the values of $\beta, J$, there are three distinct regimes where the free energy shows different behaviors. In particular, the regime $\beta< \frac{1}{2}$ and $\beta J < \frac{1}{2}$ is known as the para-magnetic regime where the result analogous to this paper was obtained in \citet{Baiklee16}. The regime when $\beta> \frac{1}{2}$ and $J<1$ is known as the spin glass regime and the other case ($\beta J > \frac{1}{2} $ and $J>1$) is known as the ferromagnetic regime. Although the results in \citet{Baiklee16} are much more general than the current paper in terms of possible choices of $(\beta,J)$, the technique of that paper is restricted to the case when the  spins $\mathbf{\sigma}=(\sigma_{1},\ldots,\sigma_{n})$ are distributed according to the uniform measure on the sphere $S_{n-1}$ which does not cover the case when $\sigma_{i}$'s are i.i.d. Rademacher random variables. This is the problem we consider in this paper. 

We now give a very brief overview of the literature for the fluctuation of free energy of classical Sherrington-Kirkpatrick model in with absence of any external field. 

The classical Sherrington-Kirkpatrick model with no external field ($h=0$) under goes a phase transition at $\beta= \frac{1}{2}$.  When the spins $\sigma_{i}$'s are i.i.d. Rademacher and $\beta< \frac{1}{2}$ the free energy has a Gaussian limiting distribution. One might look at \citet{ALR87} and \citet{CoNe95} for some references. The case $\beta> \frac{1}{2}$ is known as the low temperature regime. To the best of our limited knowledge, very few things are known about the fluctuations of the free energy in this regime. One might look at \citet{Cha17} where it is proved that the fluctuation of the free energy of the Sherrington-Kirkpatrick model is at least $O(1)$. When the spins are uniformly distributed on $S_{n-1}$, the free energy analogously undergoes a phase transition at $\beta=\frac{1}{2}$. When $\beta< \frac{1}{2}$, the free energy has a Gaussian limiting distribution and can be approximated by a linear spectral statistics of the eigenvalues. The low temperature case ($\beta> \frac{1}{2}$) is also well-known. Here the free energy has a limiting GOE Tracy-Widom distribution with $O\left( n^{-\frac{2}{3}} \right)$ fluctuations. One might look at \citet{Baikleestat} for a reference.

\noindent 
 Finally, The model considered in this paper (Hamiltonian defined in \eqref{our:hamiltonian}) was also considered in \citet{Chen12} from the point of view of thermodynamic limit of the free energy. One might also look at \citet{CadelR} where this model was studied using cavity method. However to the best of our limited knowledge the problem of fluctuation of free energy remained open.  

\subsection{Preliminary definitions}
We now give some preliminary definitions.
We start with defining a Hamiltonian which generalizes the one defined in \eqref{our:hamiltonian}. 
\begin{definition}(interactions)\label{def:ourham}
Suppose $A_{i,j}$, $1\le i \le j \le n$ be i.i.d. standard Gaussian random variables. Set $A_{j,i}=A_{i,j}$ for $i<j$. Let $M_{i,j}= \frac{1}{\sqrt{n}}A_{i,j}+ \frac{J}{n}$ and $M_{i,i}= \frac{1}{\sqrt{n}}A_{i,i}+ \frac{J'}{n} $ for some $n$ independent non negative fixed constants $J$ and $J'$. One considers the Hamiltonian $H_{n}(\sigma)= \langle \sigma, M\sigma \rangle$. The defined Hamiltonian is more general than the one defined in \eqref{our:hamiltonian} in the following sense. Definition \ref{def:ourham} allows $A_{i,i}$ to be non zero random variables with $J'$ being any arbitrary constant.
\end{definition}
\begin{definition}(Partition function and Free energy)     
Given any Hamiltonian $H_{n}(\sigma)$ where $\sigma=(\sigma_{1},\ldots, \sigma_{n})$ are distributed according to a measure $\Psi_{n}$, the partition function and free energy at an inverse temperature $\beta$ are denoted by $Z_{n}(\beta)$ and $F_{n}(\beta)$ respectively and defined as follows. 
\begin{equation}\label{eq:partition}
Z_{n}(\beta) := \int \exp\left\{\beta H_{n}(\sigma)  \right\} d \Psi_{n}(\sigma)
\end{equation} 
and 
\begin{equation}
F_{n}(\beta) : =\frac{1}{n} \log\left(Z_{n}(\beta)\right).
\end{equation}
In our case we take $\Psi_{n}$ to be the uniform probability measure on the Hypercube $\{-1, +1  \}^n$.
\end{definition}
 In our case $Z_{n}(\beta)$ is as follows:
\begin{equation}
\begin{split}
Z_{n}(\beta)&= \sum_{\sigma \in \{ -1, +1 \}^{n}}\frac{1}{2^{n}}\exp\left\{ \sum_{i,j=1}^{n}\frac{\beta}{\sqrt{n}} A_{i,j}\sigma_{i}\sigma_{j} + \frac{\beta J}{n} \sum_{i,j=1}^{n} \sigma_{i}\sigma_{j}+ \beta(J'-J)\right\}\\
&=  \sum_{\sigma \in \{ -1, +1 \}^{n}}\frac{1}{2^{n}} \exp\left\{ \frac{2\beta}{\sqrt{n}}\sum_{1\le i<j\le n}^{n}A_{i,j}\sigma_{i}\sigma_{j} + \frac{2\beta J}{n} \sum_{1\le i<j\le n}\sigma_{i}\sigma_{j} + \frac{\beta}{\sqrt{n}}\sum_{i=1}^{n}\left(A_{i,i}+\frac{J'}{\sqrt{n}}\right)\right\}.
\end{split}
\end{equation} 
Finally we define the Wasserstein distance between two distribution functions. This distance is crucially used at many places of the proofs.
\begin{definition}\label{Wass}
We at first fix $p \ge 1$.
Suppose $F^{1}$ and $F^{2}$ are two distribution functions such that $\int_{x \in \mathbb{R}} |x|^{p}d F^{1}(x)< \infty$ and $\int_{x \in \mathbb{R}} |x|^{p}dF^{2}(x)< \infty$. Then the Wasserstein distance for $p$ between $F^{1}$ and $F^{2}$ is is denoted by $W_{p}$ and defined to be 
\begin{equation}
W_{p}\left( F^{1}, F^{2} \right):=  \left[\inf_{X \sim F^{1}; Y \sim F^{2}}\E\left[ \left| X- Y \right|^{p} \right]\right]^{\frac{1}{p}}.
\end{equation}
\end{definition}
Observe that the Wasserstein distance is defined for two distribution functions. However when we write $\left[\inf_{X \sim F^{1}; Y \sim F^{2}}\E\left[ \left| X- Y \right|^{p} \right]\right]^{\frac{1}{p}}$, we consider two random variables $X\sim F^{1}$ and $Y \sim F^{2}$ such that $X$ and $Y$ are defined on the same measure space such that $\E\left[ \left| X- Y \right|^{p}\right]$ takes the lowest possible value. 

\noindent  
The following result on the Wasserstein distance is well known.
\begin{proposition}\label{prop:wass}
Suppose $\left\{X_{n}\right\}_{n=1}^{\infty}$ be a sequence of random variables and $X$ be a random variable. If $W_{2}\left( F^{X_{n}}, F^{X}\right) \to 0$, then $X_{n} \stackrel{d}{\to} X$ and $\E[X_{n}^2] \to \E[X^2]$.
\end{proposition}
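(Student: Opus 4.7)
The plan is to reduce both conclusions to an $L^{2}$ convergence statement obtained from optimal (or near-optimal) couplings realizing the Wasserstein distance. First I would fix, for each $n$, a coupling $(X_{n}',X')$ of $F^{X_{n}}$ and $F^{X}$ on a common probability space such that
\[
\E\bigl[|X_{n}'-X'|^{2}\bigr]^{1/2} \le W_{2}\bigl(F^{X_{n}},F^{X}\bigr) + \tfrac{1}{n}.
\]
(If one prefers not to rely on existence of an optimal coupling in $\mathbb{R}$, which in fact holds, this near-optimal version suffices.) By the hypothesis $W_{2}(F^{X_{n}},F^{X})\to 0$, the right-hand side tends to zero, so $X_{n}'\to X'$ in $L^{2}$, hence also in probability.

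Next I would deduce the distributional convergence. Since convergence in probability implies convergence in distribution, $X_{n}' \stackrel{d}{\to} X'$ on the common space. But the marginal laws are unchanged by coupling, so $X_{n} \stackrel{d}{=} X_{n}'$ and $X \stackrel{d}{=} X'$, giving $X_{n}\stackrel{d}{\to} X$.

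For the second moment claim, I would use that the $L^{2}$ norm is continuous under $L^{2}$ convergence via the reverse triangle inequality:
\[
\bigl|\,\|X_{n}'\|_{2} - \|X'\|_{2}\,\bigr| \le \|X_{n}'-X'\|_{2} \longrightarrow 0.
\]
Squaring and using again that $\E[X_{n}^{2}] = \E[(X_{n}')^{2}]$ and $\E[X^{2}] = \E[(X')^{2}]$ (which in particular are finite since $F^{X_{n}},F^{X}$ have finite second moment by the very definition of $W_{2}$), we conclude $\E[X_{n}^{2}]\to \E[X^{2}]$.

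There is no real obstacle here; the only mildly delicate point is the construction of the coupling, which is standard and can be handled either by invoking existence of an optimal coupling on $\mathbb{R}$ (e.g.\ the quantile coupling $X_{n}' = (F^{X_{n}})^{-1}(U)$, $X' = (F^{X})^{-1}(U)$ for $U$ uniform on $[0,1]$, which is known to attain $W_{p}$ in dimension one) or by the near-optimal sequence above. Everything else is an application of standard $L^{2}$ machinery.
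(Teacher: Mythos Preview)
Your argument is correct and complete. The paper itself does not give a proof of this proposition; it simply cites \citet{Mal72} as a reference for this standard fact. Your coupling-based proof (build a near-optimal or quantile coupling, use $L^{2}$ convergence to get convergence in probability and hence in distribution, and apply the reverse triangle inequality for the second moments) is exactly the standard route and there is nothing to add.
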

One might see \citet{Mal72} for a reference.
\section{Main result}
We are ready to state the main result of this paper.
\begin{theorem}\label{thm:asymptotic}

\noindent 
\begin{enumerate}
\item (Asymptotic normality)
Consider the Hamiltonian $H_{n}(\sigma)$ as defined in Definition \ref{def:ourham}. Let $F_{n}(\beta)$ be the free energy corresponding to the Hamiltonian $H_{n}(\sigma)$. When $\beta < \frac{1}{2}$ and $\beta J< \frac{1}{2}$ the following result holds:
\begin{equation}
n\left(F_{n}(\beta)- F(\beta)\right) \stackrel{d}{\to} N(f_{1},\alpha_{1}) 
\end{equation} 
where $F(\beta)= \beta^2$, 
\begin{equation}
\alpha_{1}= -\beta^2- \frac{1}{2}\log\left(1- 4\beta^2\right)
\end{equation}
and
\begin{equation}
f_{1}=  -\frac{1}{2} \log\left(1- 2\beta J\right) + \beta(J'- J) + \frac{1}{4}\log\left(1- 4\beta^2\right).
\end{equation}
\item (Approximation by signed cycle counts) For any sequence $m_{n}$ diverging to infinity such that $m_{n}=o\left( \sqrt{\log n} \right)$,  one also has the following approximation result for the log partition function $\log\left(Z_{n}(\beta)\right)$.
\begin{equation}
\begin{split}
&\log\left(Z_{n}(\beta)\right) + \frac{1}{2} \log\left(1- 2\beta J\right)- (n-1)\beta^2 + \beta (J  -J') - \beta C_{n,1} -\\
&~~~~~~~~~~~~~~~~~~~~~~~~~~~~ \sum_{k=2}^{m_{n}}\frac{2(2\beta)^{k}\left(C_{n,k}- (n-1)\mathbb{I}_{k=2}\right)- (2\beta)^{2k}}{4k} \stackrel{p}{\to} 0.
\end{split}
\end{equation}
 Here the statistics $C_{n,k}$'s are taken according to Definition \ref{def:signedcycles}. 
\end{enumerate}
\end{theorem}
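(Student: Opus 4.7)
The plan is to combine two ideas: linearize the Curie--Weiss quadratic via a Hubbard--Stratonovich transformation, and then treat the resulting Sherrington--Kirkpatrick partition function via the dense small-subgraph / signed-cycle expansion pioneered by \citet{Ban16} and \citet{Banerjee2017}. The two contributions then recombine: deterministic corrections from the CW side produce the $-\tfrac{1}{2}\log(1-2\beta J) + \beta(J'-J)$ piece of $f_{1}$, while the SK cycle expansion yields both the $\tfrac{1}{4}\log(1-4\beta^{2})$ piece of $f_{1}$ and all of the Gaussian fluctuation, with variance $\alpha_{1} = -\beta^{2} - \tfrac{1}{2}\log(1-4\beta^{2})$.

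The first step is to write $\exp\bigl(\tfrac{\beta J}{n}(\sum_{i}\sigma_{i})^{2}\bigr)$ as a Gaussian integral in an auxiliary variable $t$, rescale $t = s/\sqrt{n}$, and observe that in the paramagnetic regime $\beta J < 1/2$ the effective quadratic form in $s$ is strictly positive, so Laplace's method applies around the saddle at $s=0$. This reduces $\log Z_{n}(\beta)$ (up to a deterministic Jacobian contributing $-\tfrac{1}{2}\log(1-2\beta J)$) to the logarithm of a pure SK partition function with vanishing external field of order $1/\sqrt{n}$, plus a tractable Gaussian coupling in $s$ that will be absorbed by the linear term $\beta C_{n,1}$ coming from the trace/diagonal contribution.

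Next, for the SK partition function I would expand the exponential edge-by-edge via $\exp(\tfrac{2\beta A_{ij}}{\sqrt{n}}\sigma_{i}\sigma_{j}) = \cosh(\tfrac{2\beta A_{ij}}{\sqrt{n}}) + \sinh(\tfrac{2\beta A_{ij}}{\sqrt{n}})\sigma_{i}\sigma_{j}$ and sum over $\sigma$. After absorbing the $\sigma$-independent $\cosh$ factors, the surviving combinatorial structures are Eulerian multigraphs on $[n]$; a \FK-type analysis shows that asymptotically only disjoint unions of simple cycles contribute, and that a cycle of length $k$ produces a factor whose logarithm matches the $k$-th summand of the expression in part (2), with the $k=2$ shift $(n-1)$ arising from the mean of $C_{n,2}$. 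Truncating at length $m_{n}$ gives the displayed identity, with the tail controlled by an $L^{2}$ estimate forcing $m_{n} = o(\sqrt{\log n})$. For part (1), one then uses the classical fact that $\{C_{n,k}\}_{k \ge 1}$ converges jointly to independent centred Gaussians with explicit variances, and that the Taylor expansion $-\tfrac{1}{2}\log(1-4\beta^{2}) = \sum_{k \ge 1}(4\beta^{2})^{k}/(2k)$ reproduces $\alpha_{1}$ exactly. Convergence in the Wasserstein distance $W_{2}$, together with Proposition \ref{prop:wass}, then yields both distributional convergence and the second-moment identity required for the Gaussian limit.

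The main obstacle is the cycle-expansion step in the dense regime: one must prove that non-cycle Eulerian contributions (figure-eights, cycles with pendant trees, higher edge multiplicities) are negligible in $L^{2}$ and, more delicately, that the expansion is valid uniformly in the auxiliary external field $s/\sqrt{n}$ produced by the Hubbard--Stratonovich step. This is precisely where the dense subgraph conditioning machinery of \citet{Ban16} is indispensable; extending the \FK\ moment bounds to accommodate the external-field coupling, and justifying the exchange of Laplace integration in $s$ with the cycle truncation in $m_{n}$, is the technical heart of the argument.
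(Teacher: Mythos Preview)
Your route is genuinely different from the paper's, and the difference matters. You propose Hubbard--Stratonovich to decouple the Curie--Weiss quadratic, then Laplace in the auxiliary variable $s$, then an ALR/\FK-style high-temperature expansion of the resulting SK partition function with an $O(1/\sqrt{n})$ external field. The paper does \emph{none} of this. Instead it writes $\log Z_{n}(\beta)$ as $\log(d\mathbb{Q}_{n}/d\mathbb{P}_{n})$ plus explicit constants, where $\mathbb{Q}_{n}$ is the planted measure $\mathbb{Q}_{n}=\tau_{n}^{-1}\E_{\sigma}[\mathbb{Q}_{n,\sigma}]$ and $\tau_{n}$ is precisely the Curie--Weiss partition function. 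It then runs Janson's second-moment method (Proposition~\ref{prop:norcont}) directly on $d\mathbb{Q}_{n}/d\mathbb{P}_{n}$: one computes $\E_{\mathbb{P}_{n}}[(d\tilde{\mathbb{Q}}_{n}/d\mathbb{P}_{n})^{2}]$ exactly (it factors into three independent chi-square exponentials), verifies the signed cycles $C_{n,k}$ have shifted-Gaussian limits under $\mathbb{Q}_{n}$ (Proposition~\ref{prop:signdistrnew}), and reads off both the limit law and the cycle decomposition from the abstract machinery. The Curie--Weiss contribution $-\tfrac{1}{2}\log(1-2\beta J)$ falls out because $\tau_{n}\to(1-2\beta J)^{-1/2}$; no Laplace is needed.

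The key structural point you are missing is that the CW factor $\exp\bigl(\tfrac{\beta J}{n}(\sum_{i}\sigma_{i})^{2}\bigr)$ is $\sigma$-measurable, so under $\mathbb{Q}_{n,\sigma}$ it does \emph{not} alter the conditional law of $A$ given $\sigma$; it only reweights the mixing distribution over $\sigma$. Hence the signed cycles under $\mathbb{Q}_{n}$ have the same limit as under the pure-SK planted measure $\mathbb{Q}'_{n}$ (this is the content of Proposition~\ref{prop:signdistrnew}, proved via uniform integrability of the CW factor). This observation sidesteps entirely the external-field complication you identify as your main obstacle. Your appeal to ``the dense subgraph conditioning machinery of \citet{Ban16}'' to handle that obstacle is a misreading: that machinery \emph{is} the second-moment/contiguity method, not a tool for controlling high-temperature expansions uniformly in an auxiliary field. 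Your approach may well be completable, but the exchange of Laplace integration with cycle truncation that you flag is genuine extra work with no analogue in the paper.
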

Our next result is Theorem \ref{thm:asymptotic} where the approximations of $C_{n,k}$'s by linear spectral statistics are stated. 
Before going to Theorem \ref{Thm:approximation}, we need some important definitions.
We now introduce an important generating function. 
Given any $r \in \mathbb{N}$, let
\begin{equation}\label{gen:f}
\left( \frac{1-\sqrt{1-4z^2}}{2z} \right)^r= \sum_{m=r}^{\infty} f(m,r) z^{m}.
\end{equation}
The coefficients $f(m,r)$'s are key quantities for defining the variances and covariances of linear spectral statistics constructed from different power functions. 
For any $k \in \mathbb{N}$ denote 
\begin{equation}\label{def:catalan}
\psi_{k}=\left\{  
\begin{array}{ll}
0 & \text{if $k$ is odd}\\
\frac{1}{\frac{k}{2}+1} \binom{k}{\frac{k}{2}} & \text{if $k$ is even}.
\end{array}
\right.
\end{equation}
So $\psi_k$ is the $\frac{k}{2}$-th Catalan number for every even $k$.
Finally, we define a set of rescaled Chebyshev polynomials. 
These polynomials are important for drawing the connection between signed cycles $C_{n,k}$'s and the spectrum of adjacency matrix. 
The standard Chebyshev polynomial of degree $m$ is denoted by $S_{m}(x)$ and can be defined by the identity
\begin{equation}\label{def:ChebyshevI}
S_{m}\left(\cos(\theta)\right) = \cos(m\theta).
\end{equation}
In this paper we use a slight variant of $S_m$, denoted by $P_{m}$ and defined as
\begin{equation}\label{def:ChebyshevII}
P_{m}(x)= 2S_{m}\left(\frac{x}{2}\right).
\end{equation}
In particular, 
$P_{m}(2\cos(\theta)) = 2 \cos(m\theta)$.
It is easy to note that 
$P_{m}\left( z+ z^{-1}\right)= z^{m} + z^{-m}$ for all $z \in \mathbb{C}$.
One also notes that $P_{m}(\cdot)$ is even and odd whenever $m$ is even or odd respectively.

\begin{theorem}\label{Thm:approximation}(Approximation of cycles by linear spectral statistics)
Let $\tilde{A}$ be the matrix obtained by putting $0$ on the diagonal of the matrix $A$.
Let $P_{k}$ be as defined in \eqref{def:ChebyshevII}
. Then to following is true for any $3\le k= o\left(  \sqrt{\log n}\right)$ under $\mathbb{P}_{n}$.
\begin{equation}
C_{n,k}- \left\{\Tr \left( P_{k}\left( \frac{1}{\sqrt{n}} \tilde{A} \right) \right)- \E\left[ \Tr \left( P_{k}\left( \frac{1}{\sqrt{n}}\tilde{A} \right) \right)  \right]\right\} \stackrel{p}{\to} 0.
\end{equation}
Here for any function $f$ and a matrix $A$ 
\begin{equation}
\Tr\left[ f(A) \right]=\sum_{i=1}^{n} f(\lambda_{i})
\end{equation}
where $\lambda_{1},\ldots,\lambda_{n}$ are the eigenvalues of the matrix $A$. 
\end{theorem}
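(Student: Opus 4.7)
The plan is to carry out a moment/walk expansion of $\Tr\bigl(P_k(\tilde A/\sqrt n)\bigr)$ and show that, after centering, only contributions of closed walks whose edge-support is a simple $k$-cycle survive to leading order; these exactly reconstitute $C_{n,k}$.

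First I would fix $3 \le k = o(\sqrt{\log n})$ and write out the Chebyshev expansion $P_k(x) = \sum_{j=0}^{\lfloor k/2 \rfloor} a_{k,j}\, x^{k-2j}$ explicitly, deriving the coefficients $a_{k,j}$ from the identity $P_m(z+z^{-1}) = z^m + z^{-m}$. Applying $P_k$ to $\tilde A/\sqrt n$ and taking the trace yields a linear combination of moments $\Tr\bigl((\tilde A/\sqrt n)^{k-2j}\bigr)$, each of which admits a Füredi--Komlós walk expansion of the form $n^{-(k-2j)/2}\sum_{w} \prod_e \tilde A_e^{\mathrm{mult}_w(e)}$, where $w$ ranges over closed walks of length $k-2j$ in the complete graph on $\{1,\dots,n\}$ (and the zero diagonal of $\tilde A$ simply excludes walks with a stationary step).

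Next I would group walks by their \emph{shape}: the isomorphism class of the labeled multigraph together with the ordered edge sequence of the walk. For a shape with $V$ distinct vertices and total length $\ell$, the number of such walks in $K_n$ scales as $n^V$, while the normalization contributes $n^{-\ell/2}$. The standard Wigner moment analysis then implies that the leading $L^2$ contribution to $\Tr\bigl((\tilde A/\sqrt n)^{\ell}\bigr) - \E[\cdot]$ comes from walks whose shape is either a plane tree in which every edge is traversed exactly twice (which contributes only to the mean, and is therefore killed by the centering) or a simple cycle whose edges are each traversed once (which contributes to the fluctuations). All other shapes exhibit an \emph{excess} $\ell/2 - V + 1 \ge 1$ and contribute to the centered second moment only terms of order $n^{-\text{(excess)}}$ times a combinatorial weight. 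The key algebraic step is that the coefficients $a_{k,j}$ of $P_k$ are tuned precisely so that the tree-like contributions from different degrees $k-2j$ mutually cancel in the centered sum, leaving only the simple-$k$-cycle shape as the leading piece. This ``Chebyshev kills trees'' phenomenon is the combinatorial analogue of the fact that $P_k$ integrates to zero (for $k\ne 2$) against the semicircle law.

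Collecting the surviving contributions, the walks of simple-$k$-cycle shape are in $2k$-to-$1$ correspondence with simple cycles (accounting for the choice of starting vertex and orientation), so they assemble into $C_{n,k}$ up to the normalization fixed in Definition \ref{def:signedcycles}; all remaining shape-contributions are $o_p(1)$ by Chebyshev's inequality applied to the $L^2$ bound. The main obstacle will be the \emph{uniform} control of these error terms as $k$ grows with $n$: the number of shapes on at most $k$ vertices grows super-exponentially in $k$, and each shape produces a polynomial-in-$k$ combinatorial weight from Gaussian moments. The classical Füredi--Komlós bookkeeping shows that the total contribution of non-cycle, non-tree shapes is at most $n^{-1}(Ck)^{Ck}$ for a universal constant $C$, and this is $o(1)$ precisely when $k = o(\sqrt{\log n})$. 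Once this uniform estimate is established, the convergence in probability stated in the theorem follows immediately from the $L^2$ bound on $C_{n,k} - \bigl\{\Tr(P_k(\tilde A/\sqrt n)) - \E[\Tr(P_k(\tilde A/\sqrt n))]\bigr\}$.
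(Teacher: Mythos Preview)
Your overall plan—expand $P_k$ into power traces, classify closed walks by shape, and show only simple $k$-cycles survive the Chebyshev combination—is the same route the paper takes (the paper first writes each $\Tr[(\tilde A/\sqrt n)^\ell]$ as a linear combination of the $C_{n,r}$ and then inverts via Chebyshev coefficients; this is the same computation done in the other order). However, your walk classification has a genuine gap that makes the argument internally inconsistent.

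You claim the leading shapes in $\Tr[(\tilde A/\sqrt n)^\ell]$ are double-covered trees and simple $\ell$-cycles, with ``all other shapes'' having excess $\ell/2 - V + 1 \ge 1$. This omits the \emph{unicyclic-with-pendant-trees} shapes: a walk of length $\ell$ that traverses an $r$-cycle once and attached tree edges twice has $V = (\ell+r)/2$, hence excess $1 - r/2 < 0$ for every $3 \le r < \ell$ of the same parity. These contribute $f(\ell,r)\tfrac{\ell}{r}\, C_{n,r}$ to $\Tr[(\tilde A/\sqrt n)^\ell]$ at order $O(1)$ and cannot be thrown into the error. They are in fact what makes the Chebyshev cancellation possible: in your picture, the centered $\Tr[(\tilde A/\sqrt n)^{k-2j}]$ would produce only $C_{n,k-2j}$, so $\Tr[P_k(\tilde A/\sqrt n)]$ would be $\sum_j a_{k,j}\, C_{n,k-2j}$ with each shorter cycle appearing from a \emph{single} $j$ and nothing to cancel against. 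With the unicyclic contributions included, the coefficient of $C_{n,r}$ becomes $\sum_{j:\, k-2j \ge r} a_{k,j}\, \binom{k-2j}{(k-2j+r)/2}$, and this equals $\delta_{r,k}$ by the Chebyshev inversion the paper carries out explicitly.

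A second, smaller issue (even $k$ only): trees with each edge twice do \emph{not} contribute only to the mean. The centered Wigner-word sum has variance $\to 4k^2\psi_{2k}^2 \ne 0$ and is approximated in $L^2$ by $k\psi_{2k}\bigl(\tfrac{1}{n}\Tr[\tilde A^2] - \E[\cdot]\bigr)$; these tree fluctuations cancel across $j$ only via the separate identity $\sum_r P_{2k}[2r]\, r\psi_{2r} = 0$, not by centering. Your parenthetical that Chebyshev coefficients are ``tuned'' to cancel tree-like terms is correct in spirit, but the preceding sentence contradicts it.
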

The proof of Theorem \ref{Thm:approximation} is given in Section \ref{sec:thmapp}.
\section{Proof techniques and related definitions} 
As mentioned earlier, the fundamental technique of the proof of Theorem \ref{thm:asymptotic} is completely different from that of \citet{Baiklee16}. The proof in the current paper is based on the dense sub graph conditioning technique introduced in \citet{Ban16}. The fundamental idea is to view the free energy as the log of the Radon-Nikodym derivative $\left(\log\frac{d\mathbb{Q}_{n}}{d\mathbb{P}_{n}}\right)$ of two suitably defined sequences of measures $\mathbb{P}_{n}$ and $\mathbb{Q}_{n}$. Now one introduces a class of random variables called the signed cycles (Definition \ref{def:signedcycles}) and prove that these variables asymptotically determine the full Radon-Nikodym derivative. This is done by a fine second moment argument. The argument in this part is highly motivated from a paper by \citet{Jan} where it is proved that a similar kind of argument holds for random regular graphs where the signed cycle counts are replaced by standard cycle counts. The technique of cycle conditioning was also used in \citet{MNS12} in their proof of contiguity of the probability measures induced by a planted partition model and the Erd\H{o}s- R\'enyi model in the sparse regime. 

We now start with defining the signed cycles random variables. 
\begin{definition}\label{def:signedcycles}
Let $A$ be a $n \times n$ symmetric matrix with with the strict upper triangular part being i.i.d. mean $0$ and variance $1$. For $k\ge 2$, we define the signed cycles random variables $C_{n,k}$ as follows:
\begin{equation}
C_{n,k}:= \left( \frac{1}{\sqrt{n}} \right)^{k}\sum_{i_0,i_1,\ldots,i_{k-1}}A_{i_{0},i_{1}} A_{i_{1},i_{2}} \ldots A_{i_{k-1},i_{0}}.
\end{equation}
Here $i_{0},\ldots,i_{k-1}$ are taken to be all distinct. For $k=1$, $C_{n,k}$ is simply defined as follows:
\begin{equation}
C_{n,1} := \left( \frac{1}{\sqrt{n}} \right) \sum_{i} A_{i,i}.
\end{equation}

\end{definition}
In this paper we require the concept of mutual contiguity of two sequences of measures heavily. Now we define these concepts. If someone is interested one might have a look at \citet{LeCam} and \citet{LeCam00} for general discussions on contiguity.

\begin{definition}(Contiguity)
For two sequences of probability measures $\left\{\mathbb{P}_n\right\}_{n=1}^{\infty}$ and $\left\{\mathbb{Q}_n\right\}_{n=1}^{\infty}$ defined on $\sigma$-fields $(\Omega_n,\mathcal{F}_n)$,
we say that $\mathbb{Q}_n$ is contiguous with respect to $\mathbb{P}_n$, denoted by $\mathbb{Q}_n \triangleleft \mathbb{P}_n$, if for any event sequence $A_n$, $\mathbb{P}_n(A_n)\to 0$ implies $\mathbb{Q}_n(A_n)\to 0$.
We say that they are (asymptotically) mutually contiguous, denoted by $\mathbb{P}_n \triangleleft\triangleright \mathbb{Q}_n$, if both $\mathbb{Q}_n\triangleleft \mathbb{P}_n$ and $\mathbb{P}_n\triangleleft \mathbb{Q}_n$ hold.
\end{definition}

\noindent
The following result gives an useful way to study mutual contiguity:
\begin{proposition}\label{prop:useI}
Suppose that $L_n=\frac{d\mathbb{Q}_n}{d\mathbb{P}_n}$, regarded as a random variable on $(\Omega_n,\mathcal{F}_n,\mathbb{P}_n)$, converges in distribution to some random variable $L$ as $n \to \infty$. Then $\mathbb{P}_n$ and $\mathbb{Q}_n$
are mutually contiguous if and only if $L > 0$ a.s. and $\E [L] = 1$.
\end{proposition}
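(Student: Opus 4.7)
The plan is to prove this as LeCam's first lemma. The key object throughout is the likelihood ratio $L_n$, which under $\mathbb{P}_n$ is a nonnegative random variable with $\E_{\mathbb{P}_n}[L_n]=1$, and for any event $A_n$ we have the fundamental identity $\mathbb{Q}_n(A_n)=\E_{\mathbb{P}_n}[L_n\mathbf{1}_{A_n}]$.

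For the sufficiency direction ($L>0$ a.s. and $\E[L]=1$ imply mutual contiguity), I would first establish that $\{L_n\}$ is uniformly integrable under $\mathbb{P}_n$. Since $L_n\geq 0$, $L_n\Rightarrow L$, and $\E_{\mathbb{P}_n}[L_n]=1\to \E[L]$, a standard Scheffé-type argument (applied to $g_K(x):=x\wedge K$ using continuity points of the law of $L$) gives $\E_{\mathbb{P}_n}[L_n\mathbf{1}_{L_n>K}]\to \E[L\mathbf{1}_{L>K}]$, which is arbitrarily small for large $K$. With UI in hand, the direction $\mathbb{Q}_n\triangleleft\mathbb{P}_n$ follows immediately: if $\mathbb{P}_n(A_n)\to 0$ then $L_n\mathbf{1}_{A_n}\to 0$ in $\mathbb{P}_n$-probability, hence in $L^1(\mathbb{P}_n)$ by UI, giving $\mathbb{Q}_n(A_n)\to 0$. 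For $\mathbb{P}_n\triangleleft\mathbb{Q}_n$, I use the partition
\begin{equation*}
\mathbb{P}_n(A_n)\leq \mathbb{P}_n(A_n\cap\{L_n>\eps\})+\mathbb{P}_n(L_n\leq \eps)\leq \eps^{-1}\mathbb{Q}_n(A_n)+\mathbb{P}_n(L_n\leq \eps),
\end{equation*}
where the first inequality on the right uses $\mathbb{Q}_n(A_n)\geq \eps\,\mathbb{P}_n(A_n\cap\{L_n>\eps\})$. Letting $n\to\infty$ and then $\eps\downarrow 0$ through continuity points of the law of $L$, the second term tends to $\mathbb{P}(L=0)=0$ since $L>0$ a.s., finishing the argument.

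For the necessity direction, I would argue the contrapositive by constructing explicit witnesses. If $\mathbb{P}(L=0)>0$, pick continuity points $\eps_k\downarrow 0$ and, by diagonalization, choose $\eps_n\downarrow 0$ slowly enough that $A_n:=\{L_n\leq \eps_n\}$ satisfies $\liminf_n\mathbb{P}_n(A_n)\geq \mathbb{P}(L=0)>0$ while $\mathbb{Q}_n(A_n)=\E_{\mathbb{P}_n}[L_n\mathbf{1}_{L_n\leq\eps_n}]\leq \eps_n\to 0$; this violates $\mathbb{P}_n\triangleleft\mathbb{Q}_n$. If instead $L>0$ a.s.\ but $\E[L]<1$ (note $\E[L]\leq 1$ always holds by Fatou applied to $L_n$ under $\mathbb{P}_n$), choose continuity points $K_n\uparrow\infty$ such that $\mathbb{P}_n(L_n>K_n)\to 0$ while $\E_{\mathbb{P}_n}[L_n\mathbf{1}_{L_n\leq K_n}]\to \E[L]<1$ (again via bounded-continuous truncation at continuity points and diagonalization). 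Then $A_n:=\{L_n>K_n\}$ has $\mathbb{P}_n(A_n)\to 0$ but $\mathbb{Q}_n(A_n)=1-\E_{\mathbb{P}_n}[L_n\mathbf{1}_{L_n\leq K_n}]\to 1-\E[L]>0$, violating $\mathbb{Q}_n\triangleleft\mathbb{P}_n$.

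The main obstacle I anticipate is bookkeeping around convergence in distribution: $x\mathbf{1}_{x\leq K}$ and $\mathbf{1}_{x\leq \eps}$ are not continuous, so in both the UI step and in constructing the witness events I need to restrict $K$ and $\eps$ to continuity points of the law of $L$ and then extract diagonal sequences. Once that technicality is handled cleanly, everything else is a short manipulation of the identity $\mathbb{Q}_n(A)=\E_{\mathbb{P}_n}[L_n\mathbf{1}_A]$ together with the Markov-type inequality $\mathbb{P}_n(A\cap\{L_n>\eps\})\leq \eps^{-1}\mathbb{Q}_n(A)$.
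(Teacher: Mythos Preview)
Your proposal is correct and is the standard proof of LeCam's first lemma; the paper itself does not give a proof of this proposition but simply refers the reader to Proposition~3 of \citet{Jan}. There is therefore nothing to compare in terms of approach, and your write-up, including the careful handling of continuity points for the truncations and the diagonalizations in the necessity direction, would stand on its own as a self-contained argument.
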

One might look at Proposition 3 of \citet{Jan} for a proof.

\noindent 
We now state a result on mutual contiguity of measures.
\begin{proposition}
\label{prop:norcont}(Janson's second moment method):
Let $\mathbb{P}_n$ and $\mathbb{Q}_n$ be two sequences of probability measures such that for each $n$, both are defined on the common $\sigma$-algebra $(\Omega_n, \mathcal{F}_n)$.
Suppose that for each $i\geq 1$, $W_{n,i}$ are random variables defined on $(\Omega_n,\mathcal{F}_n)$.
Then the probability measures $\mathbb{P}_n$ and $\mathbb{Q}_n$ are asymptotically mutually contiguous if the following conditions hold simultaneously:
\begin{enumerate}[(i)]
\item $\mathbb{Q}_n$ is absolutely continuous with respect to $\mathbb{P}_n$
for each $n$;
\item The likelihood ratio statistic $Y_n = \frac{\mathrm{d}\mathbb{Q}_n}{\mathrm{d}\mathbb{P}_n}$ satisfies 
\begin{equation}
	\label{eq:lr-square}
\limsup_{n\to\infty}\E_{\mathbb{P}_n}\left[Y_n^2\right] \leq
\exp\left\{\sum_{i=1}^\infty \frac{\mu_i^2}{\sigma_i^2}\right\} < \infty.
\end{equation}
\item For any fixed $k\ge 1$, one has $\left( W_{n,1},\ldots, W_{n,k} \right)|\mathbb{P}_{n} \stackrel{d}{\to} \left(Z_{1}, \ldots, Z_{k}\right) $  and $\left( W_{n,1},\ldots, W_{n,k} \right)|\mathbb{Q}_{n} \stackrel{d}{\to} \left(Z'_{1}, \ldots, Z'_{k}\right)$. Further
$Z_{i}\sim N(0,\sigma_{i}^{2})$ and $Z'_{i} \sim N(\mu_{i},\sigma_{i}^{2})$ are sequences of independent random variables.

\item Under $\mathbb{P}_{n}$, $W_{n,i}$'s are uncorrelated and there exists a sequence $m_{n} \to \infty$ such that 
\begin{equation}
\Var\left[ \sum_{i=1}^{m_{n}} \frac{\mu_{i}}{\sigma_{i}^2} W_{n,i} \right] \to C < \infty
\end{equation}
Here the $\Var$ is considered with respect to the measure $\mathbb{P}_{n}$.
\end{enumerate}
In addition, we have that
under $\mathbb{P}_n$,
\begin{equation}
	\label{eq:lr-limit}
Y_n \stackrel{d}{\to} \exp\left\{\sum_{i=1}^\infty \frac{\mu_i Z_i - \frac{1}{2}\mu_i^2}{\sigma_i^2}\right\}.
\end{equation}
Furthermore, given any $\epsilon,\delta>0$ there exists a natural number $K=K(\delta,\epsilon)$ such that for any sequence $n_l$ there is a further subsequence $n_{l_m}$ such that 
\begin{equation}
	\label{eq:janson-decomp}
\limsup_{m\to\infty} \mathbb{P}_{n_{l_m}}\left( \left| \log(Y_{n_{l_m}}) - 
\sum_{k=1}^{K} \frac{2\mu_{k} W_{n_{l_m},k} -\mu_{k}^{2}}{2\sigma_{k}^{2}} 
\right|
\ge \epsilon \right) \le  \delta.
\end{equation}
\end{proposition}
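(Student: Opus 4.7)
The plan is to run the standard Janson second-moment argument: approximate the likelihood ratio $Y_n$ in $L^2(\mathbb{P}_n)$ by the finite-$K$ exponential
\begin{equation*}
\tilde Y_n^{(K)} := \exp\left\{\sum_{k=1}^{K} \frac{2\mu_k W_{n,k} - \mu_k^2}{2\sigma_k^2}\right\},
\end{equation*}
use this to deduce the distributional limit \eqref{eq:lr-limit} and verify the hypotheses of Proposition \ref{prop:useI}, and then read the quantitative statement \eqref{eq:janson-decomp} off the same $L^2$ bound combined with positivity of the limiting variable.

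\textbf{Core computation and contiguity.} The identity
\begin{equation*}
\E_{\mathbb{P}_n}\bigl[(Y_n - \tilde Y_n^{(K)})^2\bigr] = \E_{\mathbb{P}_n}[Y_n^2] - 2\,\E_{\mathbb{Q}_n}[\tilde Y_n^{(K)}] + \E_{\mathbb{P}_n}\bigl[(\tilde Y_n^{(K)})^2\bigr]
\end{equation*}
(the cross term rewritten via (i)) is the heart of the matter. Condition (ii) bounds the first summand by $\exp\{\sum_{i\ge 1}\mu_i^2/\sigma_i^2\}$. By (iii), $(W_{n,1},\ldots,W_{n,K})$ converges jointly to independent Gaussians under each of $\mathbb{P}_n$ and $\mathbb{Q}_n$; supplementing this with sufficient uniform integrability of $\tilde Y_n^{(K)}$ and its square, the Gaussian moment-generating function gives
\begin{equation*}
\E_{\mathbb{Q}_n}[\tilde Y_n^{(K)}] \to \exp\bigl\{\textstyle\sum_{k=1}^K \mu_k^2/\sigma_k^2\bigr\} \quad \text{and} \quad \E_{\mathbb{P}_n}\bigl[(\tilde Y_n^{(K)})^2\bigr] \to \exp\bigl\{\textstyle\sum_{k=1}^K \mu_k^2/\sigma_k^2\bigr\}.
\end{equation*}
Hence $\limsup_n \E_{\mathbb{P}_n}[(Y_n - \tilde Y_n^{(K)})^2] \le \exp\{\sum_{i\ge 1}\mu_i^2/\sigma_i^2\} - \exp\{\sum_{k=1}^K \mu_k^2/\sigma_k^2\} \to 0$ as $K\to\infty$. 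Continuous mapping applied to (iii) under $\mathbb{P}_n$ yields $\tilde Y_n^{(K)} \stackrel{d}{\to} \tilde Y^{(K)} := \exp\{\sum_{k=1}^K (\mu_k Z_k - \mu_k^2/2)/\sigma_k^2\}$, while (iv), which forces $\sum_k \mu_k^2/\sigma_k^2 < \infty$, makes $\tilde Y^{(K)}$ converge almost surely to the random variable $\tilde Y^{(\infty)}$ appearing on the right-hand side of \eqref{eq:lr-limit}. A three-term triangle estimate against bounded-Lipschitz test functions combines the $L^2$ closeness with these two distributional limits to give $Y_n \stackrel{d}{\to} \tilde Y^{(\infty)}$ under $\mathbb{P}_n$; since $\tilde Y^{(\infty)}$ is a.s. positive with mean $1$ (each factor $e^{(\mu_k Z_k - \mu_k^2/2)/\sigma_k^2}$ is a mean-$1$ exponential), Proposition \ref{prop:useI} delivers mutual contiguity.

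\textbf{Quantitative log-approximation and main obstacle.} For \eqref{eq:janson-decomp}, given $\epsilon,\delta>0$, first choose $\rho>0$ so that $\mathbb{P}(\tilde Y^{(\infty)} \ge 2\rho) > 1 - \delta/3$ and $\mathbb{P}(\tilde Y^{(K)} \ge 2\rho) > 1-\delta/3$ uniformly in large $K$, then pick $K=K(\epsilon,\delta)$ so large that the $L^2$ bound above is at most $(\epsilon\rho)^2\delta/9$. Along any subsequence along which this $\limsup$ is attained and the distributional limits $Y_n \stackrel{d}{\to} \tilde Y^{(\infty)}$, $\tilde Y_n^{(K)} \stackrel{d}{\to} \tilde Y^{(K)}$ hold simultaneously, the map $\log$ is $\rho^{-1}$-Lipschitz on $[\rho,\infty)$, so on the high-probability event $\{Y_n \wedge \tilde Y_n^{(K)} \ge \rho\}$ one has $|\log Y_n - \log \tilde Y_n^{(K)}| \le \rho^{-1} |Y_n - \tilde Y_n^{(K)}|$; Markov together with a union bound then produces the claimed probability estimate, since $\log \tilde Y_n^{(K)}$ is exactly the sum appearing in \eqref{eq:janson-decomp}. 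The main technical obstacle throughout is the uniform-integrability passage used to evaluate $\E_{\mathbb{Q}_n}[\tilde Y_n^{(K)}]$ and $\E_{\mathbb{P}_n}[(\tilde Y_n^{(K)})^2]$ in the limit: convergence in distribution from (iii) does not automatically upgrade to convergence of exponential moments, so one must supplement it with polynomial moment bounds on the $W_{n,k}$'s — available in the paper's applications, where the $W_{n,k}$'s are signed cycle-count statistics whose moments of every polynomial order are uniformly bounded.
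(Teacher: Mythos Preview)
Your approach is genuinely different from the paper's, and the difference matters because your version leaves a real gap under the stated hypotheses.

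You try to control $\E_{\mathbb{P}_n}[(Y_n-\tilde Y_n^{(K)})^2]$ by expanding the square and evaluating $\E_{\mathbb{Q}_n}[\tilde Y_n^{(K)}]$ and $\E_{\mathbb{P}_n}[(\tilde Y_n^{(K)})^2]$ in the limit. As you yourself flag, this requires that exponential functionals of the $W_{n,k}$'s be uniformly integrable under $\mathbb{P}_n$ and $\mathbb{Q}_n$. But conditions (i)--(iv) give you only convergence in distribution and a variance bound; nothing there controls $\E\exp\{c\,W_{n,k}\}$ uniformly in $n$, and ``polynomial moment bounds'' would not suffice either. So as a proof of the abstract proposition your argument is incomplete --- it proves a weaker statement with an extra hypothesis that happens to hold in the application.

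The paper sidesteps this entirely by never computing $\E_{\mathbb{P}_n}[(\tilde Y_n^{(K)})^2]$ at finite $n$. Instead it takes a subsequential distributional limit $L(\{n_k\})$ of $Y_n$ (tightness from (ii)), passes to a further subsequence along which $(Y_n,W_{n,1},\dots,W_{n,m})$ converges jointly, and uses the identity
\[
\E_{\mathbb{P}_n}\bigl[f(W_{n,1},\dots,W_{n,m})\,Y_n\bigr]=\E_{\mathbb{Q}_n}\bigl[f(W_{n,1},\dots,W_{n,m})\bigr]
\]
for \emph{bounded} continuous $f$. Uniform integrability of $Y_n$ alone (from $\sup_n\E Y_n^2<\infty$) lets you pass to the limit on the left; condition (iii) under $\mathbb{Q}_n$ handles the right. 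Comparing the two limits shows $L^{(m)}=\E[\,L(\{n_k\})\mid Z_1,\dots,Z_m\,]$, whence
\[
\E\bigl[(L(\{n_k\})-L^{(m)})^2\bigr]=\E[L(\{n_k\})^2]-\E[(L^{(m)})^2]\le \exp\Bigl\{\sum_{i\ge1}\mu_i^2/\sigma_i^2\Bigr\}-\exp\Bigl\{\sum_{i=1}^{m}\mu_i^2/\sigma_i^2\Bigr\},
\]
the first inequality by Fatou. This is exactly your target estimate, but obtained for the \emph{limiting} objects, where the Gaussian computation is automatic; no exponential moments of $W_{n,k}$ are ever needed. The same conditional-expectation device drives the proof of \eqref{eq:janson-decomp}. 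If you want to repair your route, replace the direct $L^2$ expansion by this change-of-measure / conditional-expectation step.
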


\noindent 

Proposition \ref{prop:norcont} is one of the most important results required for the proof of Theorem \ref{thm:asymptotic}. In particular, the rest of the proof relies on defining the measures $\mathbb{P}_{n}$ and $\mathbb{Q}_{n}$ and $W_{n,i}$'s properly. It is worth noting that in this context the statistics $C_{n,i}$'s serve as $W_{n,i}$'s. 

\noindent 
We now give the proof of Proposition \ref{prop:norcont}.

\noindent
\textbf{Proof of Proposition \ref{prop:norcont}:}
\paragraph{Proof of mutual contiguity and \eqref{eq:lr-limit}}
This proof is broken into two steps.
We focus on proving \eqref{eq:lr-limit}.
Given \eqref{eq:lr-limit}, mutual contiguity is a direct consequence of Proposition \ref{prop:useI}.

\medskip

\textbf{Step 1.}
We first prove the random variable on the right hand side of \eqref{eq:lr-limit} is almost surely positive and has mean $1$. 
Let us define 
\begin{equation}
L:=\exp\left\{ \sum_{i=1}^{\infty}\frac{2\mu_{i}Z_{i}-\mu_{i}^{2}}{2\sigma_{i}^{2}} \right\},\qquad
L^{(m)}:= \exp\left\{ \sum_{i=1}^{m}\frac{2\mu_{i}Z_{i}-\mu_{i}^{2}}{2\sigma_{i}^{2}} \right\},\quad
 m\in \mathbb{N}.
\end{equation}
As $Z_i \sim N(0,\sigma_{i}^{2})$, for any $i\in\mathbb{N}$, and so
\begin{equation}
\E\left[{\exp}
\left\{\frac{2\mu_{i}Z_{i}-\mu_{i}^{2}}{2\sigma_{i}^{2}} \right\}\right]=1.
\end{equation}
So $\{L^{(m)}\}_{m=1}^{\infty}$ is a martingale sequence and 
\begin{equation}
\E\left[ \big(L^{(m)}\big)^2 \right]=\prod_{i=1}^{m} \exp\left\{ \frac{\mu_{i}^{2}}{\sigma_{i}^{2}} \right\}=\exp\left\{ \sum_{i=1}^{m} \frac{\mu_{i}^{2}}{\sigma_{i}^{2}} \right\}.
\end{equation}
Now by the righthand side of \eqref{eq:lr-square},
$L^{(m)}$ is a $L^2$ bounded martingale.
Hence, $L$ is a well defined random variable with
\begin{equation}
\E[L] = 1,\qquad
\E[L^2]= \exp\left\{ \sum_{i=1}^{\infty} \frac{\mu_{i}^{2}}{\sigma_{i}^{2}} \right\}.
\end{equation}
On the other hand $\log(L)$ is a limit of Gaussian random variables, hence $\log(L)$ is Gaussian with
\begin{equation}
\E[\log(L)]= -\frac{1}{2} \sum_{i=1}^{\infty} \frac{\mu_{i}^{2}}{\sigma_{i}^{2}}, \qquad 
\Var( \log(L) ) = \sum_{i=1}^{\infty} \frac{\mu_{i}^{2}}{\sigma_{i}^{2}}.
\end{equation}
Hence $\mathbb{P}(L=0)= \mathbb{P}(\log(L)=-\infty)=0$. 

\medskip

\textbf{Step 2.}
Now we prove $Y_n \stackrel{d}{\to} L$.
Since
\begin{equation}
\limsup_{n \to \infty}\E_{\mathbb{P}_n}\left[ Y_n^2\right]<\infty,
\end{equation}
condition (iv) implies that the sequence $Y_n$ is tight. 
Prokhorov's theorem further implies that there is a subsequence 
$\{ n_{k} \}_{k=1}^{\infty}$ such that $Y_{n_k}$ converge in distribution to some random variable $L(\{ n_{k} \})$. 
In what follows, we prove that the distribution of $L(\{ n_{k} \})$ does not depend on the subsequence $\{ n_{k} \}$. In particular, $L(\{ n_{k} \})\stackrel{d}{=} L$.
To start with, note that since $Y_{n_k}$ converges in distribution to $L(\{ n_{k} \})$, for any further subsequence $\{ n_{k_l} \}$ of $\{ n_{k}\}$, $Y_{n_{k_l}}$ also converges in distribution to $L(\{ n_{k} \})$.

Given any fixed $\epsilon>0$ take $m$ large enough such that 
\begin{equation}
\exp\left\{ \sum_{i=1}^{\infty} \frac{\mu_{i}^{2}}{\sigma_{i}^{2}} \right\}-\exp\left\{ \sum_{i=1}^{m} \frac{\mu_{i}^{2}}{\sigma_{i}^{2}} \right\} < \epsilon.
\end{equation}
For this fixed number $m$, consider the joint distribution of 
$(Y_{n_{k}},W_{n_k,1},\ldots,W_{n_k,m})$. 
This sequence of $m+1$ dimensional random vectors with respect to $\mathbb{P}_{n_k}$ is tight by condition (ii). 
So it has a further subsequence such that $$(Y_{n_{k_l}},W_{n_{k_l},1},\ldots,W_{n_{k_l},m})|\mathbb{P}_{n_{k_l}}\stackrel{d}{\to}\left((H_1,\ldots,H_{m+1})\in (\Omega(\{ n_{k_l} \}),\mathcal{F}(\{ n_{k_l} \}),P(\{ n_{k_l} \}))(say).\right).$$ 
where $H_{1}\stackrel{d}{=} L(\{ n_{k} \})$ and $\left(H_{2},\ldots, H_{m+1}\right)\stackrel{d}{=}\left(  Z_{1},\ldots,Z_{m}\right)$
We are to show that we can define the random variables $L^{(m)}$ and $L(\{ n_{k} \})$ in such a way that there exist suitable $\sigma$-algebras $\mathcal{F}_1 \subset\mathcal{F}_2$ such that 
$L^{(m)} \in \mathcal{F}_1$, 
$L(\{ n_{k} \}) \in \mathcal{F}_2$,
and $\E \left[ L(\{ n_{k} \})\left|\right. \mathcal{F}_{1} \right]= L^{(m)}$.

Since $\limsup_{n \to \infty}\E_{\mathbb{P}_n}\left[ Y_n^2\right]< \infty$, the sequence $Y_{n_{k_l}}$ is uniformly integrable. 
This, together with condition (i), leads to
\begin{equation}\label{eqn_expder}
\E[L(\{ n_{k} \})] = \lim_{l\to\infty} 
\E_{\mathbb{P}_{n_{k_l}}} [ Y_{n_{k_l}} ] = 1.
\end{equation}
Now take any positive bounded continuous function $f:\mathbb{R}^m \to \mathbb{R}$. By Fatou's lemma 
\begin{equation}
	\label{eqn_ineq}
\liminf_{l\to\infty} 
\E_{\mathbb{P}_{n_{k_l}}} \left[f (W_{n_{k_l},1},\ldots, W_{n_{k_l},m} )Y_{n_{k_l}} \right] \ge \E \left[ f\left(Z_1,\ldots,Z_{m}\right)L(\{ n_{k} \})\right].
\end{equation}
However for any constant $\xi$, \eqref{eqn_expder} implies
$\xi=\xi\E_{\mathbb{P}_{n_{k_l}}}[ Y_{n_{k_l}} ] \to \xi\E[L(\{ n_{k} \})]= \xi$.
Observe that given any bounded continuous function $f$ we can find $\xi$ large enough so that $f+ \xi$ is a positive bounded continuous function.
So \eqref{eqn_ineq} is indeed implied by Fatou's lemma.

Now 
\begin{equation}
\begin{split}
&\liminf \E_{\mathbb{P}_{n_{k_l}}} \left[\left(f (W_{n_{k_l},1},\ldots,W_{n_{k_l},m} )+ \xi\right)Y_{n_{k_l}} \right]\\
&=\liminf \E_{\mathbb{P}_{n_{k_l}}}\left[ f(W_{n_{k_l},1},\ldots,W_{n_{k_l},m} ) Y_{n_{k_l}} \right] +\xi\\
& \ge  \E\left[ \left(f(Z_{1},\ldots,Z_{m} )+\xi\right) L(\{ n_{k} \}) \right] 
\end{split}
\end{equation}
So \eqref{eqn_ineq} holds for any bounded continuous function $f$. 
On the other hand, replacing $f$ by $-f$ we have 
\begin{equation}
	\label{eqn_ineqII}
\lim_{l\to\infty} 
\E_{\mathbb{P}_{n_{k_l}}}
\left[f(W_{n_{k_l},1},\ldots,W_{n_{k_l},m})Y_{n_{k_l}} \right] 
= \E \left[ f(Z_1,\ldots,Z_{m})L(\{ n_{k} \})\right].
\end{equation}
Now condition (ii) leads to
\begin{equation}
\int f(W_{n_{k_l},1},\ldots,W_{n_{k_l},m})Y_{n_{k_l}} \mathrm{d}\mathbb{P}_{n_{k_l}}= \int f(W_{n_{k_l},1},\ldots,W_{n_{k_l},m})\mathrm{d}\mathbb{Q}_{n_{k_l}} \to \int f(Z_1',\ldots,Z_{m}') \mathrm{d}Q.
\end{equation}
Here  $Q$ is the measure induced by $(Z_1',\ldots,Z_{m}')$. In particular, one can take the measure $Q$ such that
$(Z_1,\ldots, Z_{m})$ themselves are distributed as $(Z_1',\ldots,Z_{m}')$ under the measure $Q$. 
This is true since 
\begin{equation}
\int f(Z_1',\ldots, Z_{m}') \mathrm{d}Q= \E\left[ f(Z_1,\ldots, Z_{m}) L^{(m)}\right].
\end{equation}
for any bounded continuous function $f$, and so
$\int_{A}  \mathrm{d}Q= \E[ \mathbf{1}_{A} L^{(m)} ]$
for any $A \in \sigma(Z_1,\ldots, Z_{m})$. 
Now looking back into \eqref{eqn_ineqII}, we have for any $A \in \sigma(Z_1,\ldots, Z_{m})$,
$\E[ \mathbf{1}_{A} L^{(m)} ]= \E\left[ \mathbf{1}_{A} L(\{ n_{k} \}) \right]$.
Since by definition $L^{(m)}$ is $\sigma(Z_1,\ldots, Z_{m})$ measurable, we have 
\begin{equation}
L^{(m)}=\E\left[ L(\{ n_{k} \}) \left|\right. \sigma(Z_1,\ldots, Z_{m})  \right].	
\end{equation}

From Fatou's lemma 
\begin{equation}
\E[ L(\{ n_{k} \})^2]\le \liminf_{n \to \infty} \E_{\mathbb{P}_n}[Y_n^2]= \exp\left\{ \sum_{i=1}^{\infty} \frac{\mu_i^{2}}{\sigma_{i}^{2}} \right\}.
\end{equation}
As a consequence, we have 
\begin{equation}
0 \le \E|L(\{ n_{k} \})-L^{(m)}|^2 = \E[L(\{ n_{k} \})^2]-\E[L^{(m)2}]< \epsilon.
\end{equation}
So $W_2(F^{L^{(m)}},F^{L(\{ n_{k} \})})< \sqrt{\epsilon}$. Here $F^{L^{(m)}}$ and $F^{L(\{ n_{k} \})}$ denote the distribution functions corresponding to $L^{(m)}$ and $L(\{ n_{k} \})$ respectively. As a consequence, $W_2(F^{L^{(m)}},F^{L(\{ n_{k} \})}) \to 0$ as $m \to \infty.$ Hence $L^{(m)} \stackrel{d}{\to} L(\{ n_{k} \})$ by the result stated after Definition \ref{Wass}. 
On the other hand, we have already proved $L^{(m)}$ converges to $L$ in $L^2$. So $L(\{ n_{k} \})\stackrel{d}{=}L$. 
\paragraph{Proof of \eqref{eq:janson-decomp}}
We start with a sub sequence $\{ n_{l} \}$. We shall choose $k$ large enough which shall be specified later.
We also know that both the random variables $\log\left(Y_{n_{l}}\right)$ and $ \left\{ \sum_{i=1}^{k} \frac{2\mu_{i} W_{n_{l},i}- \mu_{i}^2}{2\sigma_{i}^2} \right\}$ are tight. 

We now prove that there is a $M$ invariant of $k$ such that both the probabilities 
\begin{equation}
\begin{split}
&\mathbb{P}_{n_{l}}\left[ -M \le \log\left(Y_{n_{l}}\right) \le M \right] \ge 1- \frac{\delta}{100}\\
&\mathbb{P}_{n_{l}} \left[  -M \le \left\{ \sum_{i=1}^{k} \frac{2\mu_{i}W_{n_{l},i}- \mu_{i}^2 }{2 \sigma_{i}^2} \right\} \le M  \right] \ge 1 -\frac{\delta}{100}
\end{split}
\end{equation}
for all $n_{l}$.
Since the random variable $Y_{n_{l}}$ do not depend on $k$ the first inequality is obvious. For the second inequality observe that 
\begin{equation}
\Var\left[ \left\{ \sum_{i=1}^{k} \frac{2\mu_{i}W_{n_{l},i}- \mu_{i}^2 }{2 \sigma_{i}^2} \right\} \right]\le \Var \left[ \sum_{i=1}^{m_{n}} \frac{2\mu_{i}W_{n_{l},i}- \mu_{i}^2 }{2 \sigma_{i}^2} \right]
\end{equation}
where $m_{n}$ is a sequence increasing to infinity as mentioned in Proposition \ref{prop:norcont}. Now 
\begin{equation}
\begin{split}
& \Var \left[ \sum_{i=1}^{m_{n}} \frac{2\mu_{i}W_{n_{l},i}- \mu_{i}^2 }{2 \sigma_{i}^2} \right] < C'
\end{split}
\end{equation}
for all $n_{l}$.
for a deterministic constant $C'$. As a consequence,
\begin{equation}
\begin{split}
\mathbb{P}_{n_{l}}\left[ \left| \sum_{i=1}^{k} \frac{2\mu_{i}W_{n_{l},i}- \mu_{i}^2 }{2 \sigma_{i}^2} \right|> M \right]\le \frac{C'}{M^2}\le \frac{\delta}{100}
\end{split}
\end{equation}
where $M^2= \frac{100C'}{\delta}$.
\begin{equation}
\mathbb{P}_{n_{l}}\left[ -M \le \log\left(Y_{n_{l}}\right) \le M \cap  -M \le \left\{ \sum_{i=1}^{k} \frac{2\mu_{i}W_{n_{l},i}- \mu_{i}^2 }{2 \sigma_{i}^2} \right\} \le M   \right] \ge 1- \frac{\delta}{50}.
\end{equation}
Now $\log(\cdot)$ is an uniformly continuous function on $[e^{-M}, e^{M}]$. So given $\epsilon>0$, there exists $\tilde{\epsilon}$ such that for any $x,y\in [e^{-M}, e^{M}]$,
\begin{equation}
\begin{split}
\left| x-y \right|\le \tilde{\epsilon} &\Rightarrow \left| \log(x)- \log(y) \right| \le \epsilon\\
\Leftrightarrow \left| x- y \right| > \tilde{\epsilon} & \Leftarrow \left| \log(x)- \log(y) \right| > \epsilon.
\end{split}
\end{equation}
 We know that there is a further sub-sequence $n_{l_{m}}$ such that $(Y_{n_{l_{m}}}, W_{n_{l_{m}},1}, \ldots, W_{n_{l_{m}},k})$ converges jointly in distribution to 
\begin{equation}
(Y_{n_{l_{m}}}, W_{n_{l_{m}},1}, \ldots, W_{n_{l_{m}},k}) \stackrel{d}{\to} (H_{1},H_{2},\ldots, H_{k+1})\in (\Omega\{ n_{l_{m}} \}, \mathcal{F}\{ n_{l_{m}} \},\mathbb{P}\{ n_{l_{m}} \}).
\end{equation}
Let $\mathcal{F}\{ n_{l_{m}},1 \}\subset \mathcal{F}\{ n_{l_{m}} \}$ be the sigma algebra generated by $(H_{2},\ldots, H_{k+1})$. Here $H_{1} \stackrel{d}{=} L$ and $\left(H_{2},\ldots, H_{k+1}\right)\stackrel{d}{=}\left( Z_{1},\ldots, Z_{k} \right)$. Using the arguments same as the previous proof we see that 
\begin{equation}
\E\left[ H_{1} \left| \mathcal{F}_{n_{l_{m}},1} \right. \right]=\exp \left\{ \sum_{i=1}^{k} \frac{2\mu_{i}H_{i+1}- \mu_{i}^2 }{2 \sigma_{i}^2} \right\}.
\end{equation}
As a consequence, we have 
\begin{equation}
0\le \E\left( H_{1} - \exp \left\{ \sum_{i=1}^{k} \frac{2\mu_{i}H_{i+1}- \mu_{i}^2 }{2 \sigma_{i}^2} \right\} \right)^2\le  \exp\left\{ \sum_{i=1}^{\infty} \frac{\mu_{i}^2}{\sigma_{i}^2} \right\} - \exp\left\{ \sum_{i=1}^{k}\frac{\mu_{i}^2}{\sigma_{i}^2}\right\}.
\end{equation}
We shall choose this $k$ large enough so that 
\begin{equation}
\exp\left\{ \sum_{i=1}^{\infty} \frac{\mu_{i}^2}{\sigma_{i}^2} \right\} - \exp\left\{ \sum_{i=1}^{k}\frac{\mu_{i}^2}{\sigma_{i}^2}\right\} < \frac{\delta \tilde{\epsilon}^2}{100}.
\end{equation}
Now by Chebyshev's inequality 
\begin{equation}
\mathbb{P}\left[\left| H_{1} - \exp \left\{ \sum_{i=1}^{k} \frac{2\mu_{i}H_{i+1}- \mu_{i}^2 }{2 \sigma_{i}^2} \right\} \right|\ge \frac{\tilde{\epsilon}}{2}\right]\le \frac{\delta \tilde{\epsilon}^2}{25 \tilde{\epsilon}^2}=\frac{\delta}{25}.
\end{equation}
Since 
\begin{equation}
\left( Y_{n_{l_{m}}}, W_{n_{l_{m}},1},\ldots, W_{n_{l_{m}},k} \right) \stackrel{d}{\to} \left( H_{1},H_{2},\ldots, H_{k+1} \right)
\end{equation}
by continuous mapping theorem for in distributional convergence, we have 
\begin{equation}
Y_{n_{l_{m}}} - \exp\left\{ \sum_{i=1}^{k} \frac{2\mu_{i} W_{n_{l_{m}},i}- \mu_{i}^2}{2\sigma_{i}^2} \right\} \stackrel{d}{\to} H_{1} - \exp \left\{ \sum_{i=1}^{k} \frac{2\mu_{i}H_{i+1}- \mu_{i}^2 }{2 \sigma_{i}^2} \right\}.
\end{equation}
Since the set $[\frac{\tilde{\epsilon}}{2},\infty)$ is closed, we have by Portmanteau theorem, 
\begin{equation}
\begin{split}
&\limsup_{n_{l_{m}}} \mathbb{P}_{n_{l_{m}}}\left[ \left| Y_{n_{l_{m}}} - \exp\left\{ \sum_{i=1}^{k} \frac{2\mu_{i} W_{n_{l_{m}},i}- \mu_{i}^2}{2\sigma_{i}^2} \right\} \right| > \tilde{\epsilon}\right]\\
 &\le  \limsup_{n_{l_{m}}}\mathbb{P}_{n_{l_{m}}}\left[ \left| Y_{n_{l_{m}}} - \exp\left\{ \sum_{i=1}^{k} \frac{2\mu_{i} W_{n_{l_{m}},i}- \mu_{i}^2}{2\sigma_{i}^2} \right\} \right|\ge \frac{\tilde{\epsilon}}{2}\right]\\
& \le \frac{\delta}{25}.
\end{split}
\end{equation}
As a consequence, 
\begin{equation}
\begin{split}
&\frac{\delta}{25}
 \ge \mathbb{P}_{n_{l_{m}}}\left[ \left| Y_{n_{l_{m}}} - \exp\left\{ \sum_{i=1}^{k} \frac{2\mu_{i} W_{n_{l_{m}},i}- \mu_{i}^2}{2\sigma_{i}^2} \right\} \right|> \tilde{\epsilon}\right]\\
&\ge \mathbb{P}_{n_{l_{m}}}\left[  Y_{n_{l_{m}}} \in [e^{-M}, e^{M}] \cap  \exp \left\{ \sum_{i=1}^{k} \frac{2\mu_{i}H_{i+1}- \mu_{i}^2 }{2 \sigma_{i}^2} \right\} \in [e^{-M}, e^{M}] \cap \left|  Y_{n_{l_{m}}} - \exp \left\{ \sum_{i=1}^{k} \frac{2\mu_{i}H_{i+1}- \mu_{i}^2 }{2 \sigma_{i}^2} \right\} \right| > \tilde{\epsilon}  \right]\\
& \ge \mathbb{P}_{n_{l_{m}}}\left[ Y_{n_{l_{m}}} \in [e^{-M}, e^{M}] \cap  \exp \left\{ \sum_{i=1}^{k} \frac{2\mu_{i}H_{i+1}- \mu_{i}^2 }{2 \sigma_{i}^2} \right\} \in [e^{-M}, e^{M}] \cap \left|  \log\left(Y_{n_{l_{m}}}\right) -  \left\{ \sum_{i=1}^{k} \frac{2\mu_{i}H_{i+1}- \mu_{i}^2 }{2 \sigma_{i}^2} \right\} \right|> \epsilon \right]\\
& \ge 1-  \mathbb{P}_{n_{l_{m}}}\left[\left( Y_{n_{l_{m}}} \in [e^{-M}, e^{M}] \cap  \exp \left\{ \sum_{i=1}^{k} \frac{2\mu_{i}H_{i+1}- \mu_{i}^2 }{2 \sigma_{i}^2} \right\} \in [e^{-M}, e^{M}] \right)^{c} \right]\\
&~~~~~~~~~~ -\mathbb{P}_{n_{l_{m}}}\left[\left|  \log\left(Y_{n_{l_{m}}}\right) -  \left\{ \sum_{i=1}^{k} \frac{2\mu_{i}H_{i+1}- \mu_{i}^2 }{2 \sigma_{i}^2} \right\} \right|\le  \epsilon \right]\\
& \ge \mathbb{P}_{n_{l_{m}}}\left[\left|  \log\left(Y_{n_{l_{m}}}\right) -  \left\{ \sum_{i=1}^{k} \frac{2\mu_{i}H_{i+1}- \mu_{i}^2 }{2 \sigma_{i}^2} \right\} \right|>  \epsilon \right]- \frac{\delta}{100}\\
&\Rightarrow  \mathbb{P}_{n_{l_{m}}}\left[\left|  \log\left(Y_{n_{l_{m}}}\right) -  \left\{ \sum_{i=1}^{k} \frac{2\mu_{i}H_{i+1}- \mu_{i}^2 }{2 \sigma_{i}^2} \right\} \right|>  \epsilon \right] \le \frac{\delta}{25} + \frac{\delta}{100}< \delta.
\end{split}
\end{equation}
\hfill{$\square$}
\section{Construction of $\mathbb{P}_{n}$ and $\mathbb{Q}_{n}$ and asymptotic distribution of signed cycles}
\subsection{Construction of the measure $\mathbb{Q}_{n}$}\label{subsec:meas}
We at first give the construction of measures $\mathbb{P}_{n}$ and $\mathbb{Q}_{n}$. We assume that the random variables $\left( A_{i,j} \right)_{1\le i <j \le n}$ are defined on $\left(\Omega_{n},\mathcal{F}_{n}\right)$. 

\noindent 
In this paper $\mathbb{P}_{n}$ is simply taken to be the measure induced by $\left( A_{i,j} \right)_{1\le i <j \le n}$. We now define the measure $\mathbb{Q}_{n}$ in the following way: 
At first for any given $\sigma \in \{ -1, +1 \}^n$, we define the measure $\mathbb{Q}_{n,\sigma}$ by 
\begin{equation}\label{def:qsigma}
\frac{d\mathbb{Q}_{n,\sigma}}{d\mathbb{P}_{n}} := \exp \left\{ \sum_{i<j} \left(\frac{2\beta}{\sqrt{n}}\sigma_{i}\sigma_{j} A_{i,j}- \frac{2\beta^2}{n}\right) + \frac{\beta J}{n} \left( \sum_{i=1}^{n}\sigma_{i} \right)^2  \right\}.
\end{equation}
Observe that $\mathbb{Q}_{n,\sigma}$ is not in general a probability measure. In particular,
\begin{equation}
\int_{\Omega_{n}} d \mathbb{Q}_{n,\sigma} = \exp\left\{ \frac{\beta J}{n} \left( \sum_{i=1}^{n}\sigma_{i} \right)^2 \right\}.  
\end{equation}
Here we have used the fact that $\E\left[ \exp{tX} \right]=\exp\left\{\frac{t^2}{2}\right\}$ whenever $X \sim N(0,1)$.
Here $\Omega_{n}$ is the sample space on which the random variables $A_{i,j}$'s are defined.
Finally, we define 
\begin{equation}\label{def:q}
\mathbb{Q}_{n} := \frac{1}{\E_{\Psi_{n}}\left[  \exp\left\{ \frac{\beta J}{n}\left( \sum_{i=1}^{n}\sigma_{i} \right)^2\right\}  \right]}\sum_{\sigma \in \{-1, +1  \}^n} \frac{1}{2^n} \mathbb{Q}_{n,\sigma}.
\end{equation}
Observe that $\mathbb{Q}_{n}$ is a valid probability measure on $\Omega_{n}$.  Let
\begin{equation}
\tau_{n}:=\E_{\Psi_{n}}\left[  \exp\left\{ \frac{\beta J}{n}\left( \sum_{i=1}^{n}\sigma_{i} \right)^2\right\}  \right].
\end{equation}
One might observe that $\tau_{n}$ is the partition function of Curie-Weiss model.
From Hoeffding's inequality we have 
\begin{equation}
\begin{split}
&\mathbb{P}\left[ \frac{1}{\sqrt{n}}\left| \sum_{i=1}^{n} \sigma_{i} \right|>t \right]\le 2 \exp\left\{ - \frac{t^2}{2} \right\}\\
& \Rightarrow \mathbb{P} \left[ \exp\left\{\frac{\beta J}{n}\left( \sum_{i=1}^{n}\sigma_{i} \right)^2  \right\} \ge t \right] \le \exp\left\{ - \left( \frac{\log t}{2\beta J} \right) \right\}= \left( \frac{1}{t} \right)^{\alpha_{0}}
\end{split}
\end{equation}
for some $\alpha_{0}>1$. This makes the random variable $\exp\left\{\frac{\beta J}{n}\left( \sum_{i=1}^{n}\sigma_{i} \right)^2  \right\}$ uniformly integrable.
Hence 
\begin{equation}
\tau_{n}\to \frac{1}{\sqrt{1-2\beta J}}.
\end{equation}
 Plugging in the definition of partition function in \eqref{eq:partition}, 
it is worth noting that:
\begin{equation}
\begin{split}
\frac{d\mathbb{Q}_{n}}{d\mathbb{P}_{n}}&= \frac{1}{\tau_{n}} \sum_{\sigma \in \{-1, +1  \}^n} \frac{1}{2^n}\exp \left\{ \sum_{i<j} \left(\frac{2\beta}{\sqrt{n}}\sigma_{i}\sigma_{j} A_{i,j}- \frac{2\beta^2}{n}\right) + \frac{\beta J}{n} \left( \sum_{i=1}^{n}\sigma_{i} \right)^2  \right\}\\
&= \frac{1}{\tau_{n}} \exp\left\{-(n-1)\beta^2 + \beta J\right\} \exp\left\{ - \frac{\beta}{\sqrt{n}} \sum_{i=1}^{n} A_{i,i}  -\beta J'\right\}Z_{n}(\beta).
\end{split}
\end{equation}
So in order to prove Theorem \ref{thm:asymptotic} it is enough to prove a central limit theorem for $\log\left(\frac{d\mathbb{Q}_{n}}{d\mathbb{P}_{n}}\right)$ and to prove that $\log\left(\frac{d\mathbb{Q}_{n}}{d\mathbb{P}_{n}}\right)$ is asymptotically independent of $\frac{1}{\sqrt{n}}\sum_{i=1}^{n} A_{i,i}$.
\subsection{Asymptotic distribution of $C_{n,i}$'s under $\mathbb{P}_{n}$ and $\mathbb{Q}_{n}$}
In order to derive the limiting distribution of $C_{n,i}$'s under $\mathbb{Q}_{n}$ we at first need to define another sequence of measure $\mathbb{Q}_{n}'$. We shall at first derive the limiting distribution of $C_{n,i}$'s under $\mathbb{Q}_{n}'$ and then we shall find the limiting distribution of $C_{n,i}$'s under $\mathbb{Q}_{n}$. \\
Let for any given $\sigma \in \{-1,+1 \}^n$, $\mathbb{Q}_{n,\sigma}'$ be defined as 
\begin{equation}
\frac{d\mathbb{Q}_{n,\sigma}'}{d\mathbb{P}_{n}}= \exp \left\{ \sum_{i<j} \left(\frac{2\beta}{\sqrt{n}}\sigma_{i}\sigma_{j} A_{i,j}- \frac{2\beta^2}{n}\right) \right\}.
\end{equation}
Observe that $\mathbb{Q}_{n,\sigma}'$ is a probability measure. In fact $\left(A_{i,j}\right)_{1 \le i < j \le n }\left|_{\mathbb{Q}_{n,\sigma}'}  \right.$ are independent normal random variables with $A_{i,j}\left|_{\mathbb{Q}_{n,\sigma}'}  \right. \sim N\left( \frac{2\beta}{\sqrt{n}}\sigma_{i}\sigma_{j},1 \right)$. Here $\left(A_{i,j}\right)_{1 \le i < j \le n }\left|_{\mathbb{Q}_{n,\sigma}'}  \right.$  denote the joint distribution of the random variables $\left(A_{i,j}\right)_{1\le i,j \le n}$ under the measure $\mathbb{Q}_{n,\sigma}'$. Finally
\begin{equation}
\mathbb{Q}_{n}':= \frac{1}{2^{n}}\sum_{\sigma\in \{-1, 1 \}^{n}}  \mathbb{Q}_{n,\sigma}'.
\end{equation}

\noindent 
The first result in this section gives the asymptotic distribution of $C_{n,i}$'s under $\mathbb{P}_{n}$ and $\mathbb{Q}_{n}$. 
\begin{proposition}\label{prop:signdistr}
\begin{enumerate}
\item Under $\mathbb{P}_{n}$, we have for any $2 \le k_{1}< k_{2}\ldots < k_{l} = o\left( \sqrt{\log(n)} \right)$ with $l$ fixed,  
\begin{equation}
\left( \frac{C_{n,k_{1}}- (n-1)\mathbb{I}_{k_{1}=2}}{\sqrt{2k_{1}}}, \ldots, \frac{C_{n,k_{l}}}{\sqrt{2k_{l}}} \right) \stackrel{d}{\to} N_{l}(0, I_{l}).
\end{equation}
\item Let $\Psi_{n}$ be the uniform probability measure on the hyper cube $\{ -1, +1\}^{n}$. Then there exists a set $S_{n}$ with $\Psi_{n}\left(S_{n}\right) \to 0$, we have for all $\sigma \in S_{n}^{c}$, under $\mathbb{Q}_{n,\sigma}'$
\begin{equation}\label{res:qnsig}
\left( \frac{C_{n,k_{1}}- (n-1)\mathbb{I}_{k_{1}=2}- \mu_{k_{1}}}{\sqrt{2k_{1}}}, \ldots, \frac{C_{n,k_{l}}-\mu_{k_{l}}}{\sqrt{2k_{l}}} \right) \stackrel{d}{\to} N_{l}(0, I_{l})
\end{equation}
where $\mu_{i}:= \left( 2 \beta \right)^{i}$. This implies under $\mathbb{Q}_{n}'$,
\begin{equation}
\left( \frac{C_{n,k_{1}}- (n-1)\mathbb{I}_{k_{1}=2}- \mu_{k_{1}}}{\sqrt{2k_{1}}}, \ldots, \frac{C_{n,k_{l}}-\mu_{k_{l}}}{\sqrt{2k_{l}}} \right) \stackrel{d}{\to} N_{l}(0, I_{l}).
\end{equation}
\item Finally, $C_{n,1}\stackrel{d}{\to} N(0,1)$ under $\mathbb{P}_{n}$ and is asymptotically independent of the process $\{ C_{n,k}- (n-1)\mathbb{I}_{k=2} \}_{k\ge 2}$.
\end{enumerate}
\end{proposition}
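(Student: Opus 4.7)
The plan is to handle the three parts separately, using the method of moments for parts (1) and (2) and exact independence of diagonal and off-diagonal entries for part (3).

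For Part (1), the strategy is the classical moment method for closed walks / signed cycles of a Wigner-type matrix. For any fixed $k_1<\cdots<k_l$ and non-negative exponents $m_1,\ldots,m_l$, I would expand the mixed moment
\begin{equation*}
\E_{\mathbb{P}_n}\Big[\prod_{j=1}^{l} \big(C_{n,k_j}-(n-1)\I_{k_j=2}\big)^{m_j}\Big]
\end{equation*}
as a sum over tuples of cycle-walks on distinct vertices. Wick's formula for the Gaussian $A$-variables forces the edges to be paired, which restricts the contributing multigraphs obtained by gluing the individual cycles. A Füredi--Koml\'os-style count then shows that the only multigraphs surviving in the $n\to\infty$ limit are those whose cycles pair off into ``doubled cycles'' of matching length, with each doubled cycle contributing a factor $2k_j$ (the factor $2$ coming from the two traversal orientations). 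This reproduces exactly the Gaussian moment with independent coordinates of variance $2k_j$. The scaling regime $k_l = o(\sqrt{\log n})$ is handled by the usual bound showing that contributions from non-paired or ``degenerate'' multigraphs grow no faster than $k_l^{Ck_l}= e^{O(k_l\log k_l)}$, which is still negligible compared with the leading $n$-power.

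For Part (2), decompose the Gaussian matrix under $\mathbb{Q}_{n,\sigma}'$ as $A_{i,j}=B_{i,j}+\mu_{i,j}$ with $\mu_{i,j}=\tfrac{2\beta}{\sqrt n}\sigma_i\sigma_j$ and $B_{i,j}$ i.i.d.\ standard Gaussian. Expanding
\begin{equation*}
C_{n,k}\;=\;\frac{1}{n^{k/2}}\sum_{i_0,\ldots,i_{k-1}\text{ distinct}}\;\prod_{j=0}^{k-1}\big(B_{i_j,i_{j+1}}+\mu_{i_j,i_{j+1}}\big)
\end{equation*}
and grouping by the subset $S\subset\{0,\ldots,k-1\}$ of factors taken from the $B$-part produces three classes of terms. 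The all-$\mu$ term ($S=\emptyset$) is deterministic: since $\sigma_i^2=1$, the $\sigma$-product around the cycle telescopes and yields $(2\beta)^k n^{-k/2}\cdot n(n-1)\cdots(n-k+1)\to (2\beta)^k=\mu_k$. The all-$B$ term ($|S|=k$) is precisely the quantity analyzed in Part (1) applied to the Wigner matrix $B$, so by Part (1) it converges jointly to $N(0,2k)$. The main work is to show that each mixed term ($1\le|S|\le k-1$) vanishes in $\mathbb{Q}_{n,\sigma}'$-probability for all $\sigma$ outside an exceptional set $S_n$ with $\Psi_n(S_n)\to 0$. A direct second-moment computation bounds the variance of a mixed term by a sum of $\sigma$-products over pairs of partially-matched closed walks; choosing $S_n$ to be a concentration event such as $\{|\sum_i\sigma_i|>n^{1/2+\varepsilon}\}$ (plus analogous control of higher $\sigma$-sums indexed by the uncovered edges of the walk pair) is enough to kill all such contributions. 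Joint convergence across $k_1,\ldots,k_l$ follows by running the same method-of-moments argument on linear combinations. Finally, passing from $\mathbb{Q}_{n,\sigma}'$ to the mixture $\mathbb{Q}_n'=2^{-n}\sum_\sigma\mathbb{Q}_{n,\sigma}'$ is immediate because convergence holds uniformly on $S_n^c$ and $\Psi_n(S_n)\to 0$.

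For Part (3), $C_{n,1}=\tfrac{1}{\sqrt n}\sum_i A_{i,i}$ is by construction an exact $N(0,1)$ under $\mathbb{P}_n$ (a scaled sum of i.i.d.\ standard Gaussians); since the diagonal entries $\{A_{i,i}\}$ are independent of the strict upper triangle and $\{C_{n,k}\}_{k\ge 2}$ depends only on the latter, $C_{n,1}$ is in fact exactly independent of the process in Part (1), which upgrades trivially to asymptotic independence. The main obstacle is the mixed-term bound in Part (2): to control the $2^k-2$ intermediate terms uniformly over typical $\sigma$ one must combine the Wigner-type combinatorics of $B$-walks with concentration of non-trivial $\sigma$-polynomials, and the exceptional set $S_n$ must be calibrated so that every such polynomial appearing across all $1\le|S|\le k-1$ and all $k$ under consideration is simultaneously small.
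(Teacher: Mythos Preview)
Your overall architecture matches the paper's: method of moments with Wick pairing for Part~(1), the decomposition $A_{i,j}=B_{i,j}+\tfrac{2\beta}{\sqrt n}\sigma_i\sigma_j$ for Part~(2), and the diagonal/off-diagonal split for Part~(3). Parts~(1) and~(3) are essentially what the paper does (the paper folds~(3) into the same moment computation, but your exact-independence observation is a clean shortcut).

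The one place you diverge is the handling of the mixed terms in Part~(2). You propose to control the variance of $\sum_w V_{n,k,w}$ via concentration of $\sigma$-polynomials on a good set $S_n^c$, and you flag this as ``the main obstacle''. The paper avoids this entirely: because $|\sigma_i\sigma_j|=1$, every $\sigma$-product appearing in $\E[V_{n,k,w}V_{n,k,x}]$ is bounded in absolute value by~$1$, so the second-moment bound is completely $\sigma$-free. Concretely, for $\E[V_{n,k,w}V_{n,k,x}]\neq 0$ one needs the $B$-edges of $w$ and $x$ to coincide, i.e.\ $E_w\setminus E_f=E_x\setminus E_g$; bounding the residual $\sigma$-factors by~$1$ and counting such pairs $(w,x)$ combinatorially (the shared $B$-edges form a forest, so $\#V_{E_w\setminus E_f}\ge \#(E_w\setminus E_f)+1$) gives $\Var\big(\sum_w V_{n,k,w}\big)\le (4k)^k/n\to 0$ uniformly in $\sigma$. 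Thus one may take $S_n=\emptyset$; no concentration of $\sigma$-sums is needed, and what you identified as the main obstacle evaporates. Your concentration route would also work, but it introduces a union bound over edge patterns that is unnecessary here.
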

Here $N_{l}(\mu,\Sigma)$ denotes an $l$ dimensional normal random vector with mean parameter $\mu$ and variance parameter $\Sigma$.
The proof of Proposition \ref{prop:signdistr} is given in Section \ref{sec:propsign}.
With Proposition \ref{prop:signdistr}, we now give the asymptotic distribution of $C_{n,i}$'s under $\mathbb{Q}_{n}$.
\begin{proposition}\label{prop:signdistrnew}
Under $\mathbb{Q}_{n}$, we have for any $2 \le k_{1}< k_{2}\ldots < k_{l} = o\left( \sqrt{\log(n)} \right)$ with $l$ fixed,  
\begin{equation}
\left( \frac{C_{n,k_{1}}- (n-1)\mathbb{I}_{k_{1}=2}-\mu_{k_1}}{\sqrt{2k_{1}}}, \ldots, \frac{C_{n,k_{l}}-\mu_{k_{l}}}{\sqrt{2k_{l}}} \right) \stackrel{d}{\to} N_{l}(0, I_{l}).
\end{equation}
\end{proposition}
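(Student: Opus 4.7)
The plan is to realize $\mathbb{Q}_n$ as a mixture of the spin-conditional measures $\mathbb{Q}'_{n,\sigma}$ over a tilted prior on the hypercube, and then transport the convergence established in Proposition \ref{prop:signdistr}(2) through the mixing variable.

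First, combining $\frac{d\mathbb{Q}_{n,\sigma}}{d\mathbb{P}_n}=\exp\{\frac{\beta J}{n}(\sum_i\sigma_i)^2\}\cdot\frac{d\mathbb{Q}'_{n,\sigma}}{d\mathbb{P}_n}$ with \eqref{def:q} yields
\begin{equation*}
\mathbb{Q}_n \;=\; \sum_{\sigma\in\{-1,+1\}^n}\tilde\Psi_n(\sigma)\,\mathbb{Q}'_{n,\sigma},\qquad \tilde\Psi_n(\sigma) \;:=\; \frac{1}{\tau_n\,2^n}\exp\Bigl\{\tfrac{\beta J}{n}\bigl(\textstyle\sum_{i=1}^n\sigma_i\bigr)^2\Bigr\},
\end{equation*}
and the defining identity of $\tau_n$ forces $\tilde\Psi_n$ to be a probability measure on $\{-1,+1\}^n$---namely the tilt of the uniform measure $\Psi_n$ by the density $D_n(\sigma):=\exp\{\beta J\,m_n(\sigma)^2\}/\tau_n$, where $m_n(\sigma):=n^{-1/2}\sum_i\sigma_i$. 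The power-tail Hoeffding estimate already derived in Subsection~\ref{subsec:meas} gives $\Psi_n[D_n\ge t]\le t^{-\alpha_0}$ for some $\alpha_0>1$, hence $D_n$ is uniformly integrable under $\Psi_n$; in particular the bad set $S_n$ from Proposition \ref{prop:signdistr}(2) satisfies $\tilde\Psi_n(S_n)=\E_{\Psi_n}[D_n\mathbf{1}_{S_n}]\to 0$.

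To pass the conditional CLT through the mixture, fix a bounded continuous $f\colon\mathbb R^l\to\mathbb R$, write $\vec Y_n$ for the centered-rescaled cycle vector appearing in the statement, and set $c:=\E[f(\vec Z)]$ with $\vec Z\sim N_l(0,I_l)$ and $h_n(\sigma):=\E_{\mathbb{Q}'_{n,\sigma}}[f(\vec Y_n)]$. Proposition \ref{prop:signdistr}(2) tells me that $h_n(\sigma_n)\to c$ along every sequence with $\sigma_n\in S_n^c$, which upgrades by a standard subsequence-and-contradiction argument to the uniform statement $\sup_{\sigma\in S_n^c}|h_n(\sigma)-c|\to 0$. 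Splitting the mixture integral over $S_n$ and $S_n^c$ then delivers
\begin{equation*}
\bigl|\E_{\mathbb{Q}_n}[f(\vec Y_n)]-c\bigr|\;\le\;\sup_{\sigma\in S_n^c}|h_n(\sigma)-c|\;+\;2\|f\|_\infty\tilde\Psi_n(S_n)\;\longrightarrow\;0,
\end{equation*}
so the Portmanteau theorem gives the desired joint convergence to $N_l(0,I_l)$.

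The only genuinely new ingredient is this uniformity $\sup_{\sigma\in S_n^c}|h_n(\sigma)-c|\to 0$, since Proposition \ref{prop:signdistr}(2) is phrased pointwise in $\sigma$ while the ``typical'' set $S_n^c$ moves with $n$; this should be the main point to watch. Fortunately the quantifier structure ``for all $\sigma\in S_n^c$'' in the hypothesis makes the uniformity automatic through the subsequence extraction sketched above, so no input beyond Proposition \ref{prop:signdistr}(2) and the uniform integrability of $D_n$ is actually required.
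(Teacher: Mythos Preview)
Your proof is correct and rests on the same two ingredients as the paper's: the conditional CLT of Proposition~\ref{prop:signdistr}(2) and uniform integrability of the Curie--Weiss density $\exp\{\tfrac{\beta J}{n}(\sum_i\sigma_i)^2\}$ under $\Psi_n$. The only difference is in the packaging: the paper keeps the base measure $\Psi_n$, writes $\E_{\mathbb{Q}_n}[f(\vec Y_n)]=\tau_n^{-1}\E_{\Psi_n}[\exp\{\beta J m_n^2\}F_n(\sigma)]$, uses $F_n(\sigma)\stackrel{p}{\to}c$ together with Slutsky and the chi-squared limit of $m_n^2$, and then invokes uniform integrability to pass to expectations (the $\tau_n$ factor cancelling the limit $\E[e^{\beta J Y}]=(1-2\beta J)^{-1/2}$); you instead absorb the density into the prior to form $\tilde\Psi_n$, extract the uniform bound $\sup_{S_n^c}|h_n-c|\to 0$ from the sequence-wise hypothesis, and split over $S_n$ and $S_n^c$. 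Your route is marginally more elementary since it sidesteps Slutsky and the identification of the chi-squared limit, but the content is the same.
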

\begin{proof}
We assume Proposition \ref{prop:signdistr} and give the proof. We need to prove for any bounded continuous function $f: \mathbb{R}^{l} \to \mathbb{R}$,
\begin{equation}
\int f\left(\frac{C_{n,k_{1}}- (n-1)\mathbb{I}_{k_{1}=2}-\mu_{k_1}}{\sqrt{2k_{1}}}, \ldots, \frac{C_{n,k_{l}}-\mu_{k_{l}}}{\sqrt{2k_{l}}}\right) d \mathbb{Q}_{n} \to \E \left[ f(Z_{k_{1}},\ldots, Z_{k_{l}}) \right]
\end{equation}
where $Z_{k_{1}},\ldots, Z_{k_{l}}$ are independent standard Gaussian random variables.
Now 
\begin{equation}
\begin{split}
&\int_{\Omega_{n}} f\left(\frac{C_{n,k_{1}}- (n-1)\mathbb{I}_{k_{1}=2}-\mu_{k_1}}{\sqrt{2k_{1}}}, \ldots, \frac{C_{n,k_{l}}-\mu_{k_{l}}}{\sqrt{2k_{l}}}\right) d \mathbb{Q}_{n}\\
 & = \frac{1}{2^n}\sum_{\sigma \in \{ -1,+1 \}^{n}} \int_{\Omega_{n}} f\left(\frac{C_{n,k_{1}}- (n-1)\mathbb{I}_{k_{1}=2}-\mu_{k_1}}{\sqrt{2k_{1}}}, \ldots, \frac{C_{n,k_{l}}-\mu_{k_{l}}}{\sqrt{2k_{l}}}\right) d \mathbb{Q}_{n,\sigma}\\
 &= \frac{1}{2^n}\sum_{\sigma \in \{ -1,+1 \}^{n}}\int_{\Omega_{n}} f\left(\frac{C_{n,k_{1}}- (n-1)\mathbb{I}_{k_{1}=2}-\mu_{k_1}}{\sqrt{2k_{1}}}, \ldots, \frac{C_{n,k_{l}}-\mu_{k_{l}}}{\sqrt{2k_{l}}}\right) \frac{d\mathbb{Q}_{n,\sigma}}{d\mathbb{P}_{n}} d\mathbb{P}_{n}\\
 &=   \frac{1}{\tau_{n}}\frac{1}{2^n}\sum_{\sigma \in \{ -1,+1 \}^{n}}\int_{\Omega_{n}} f\left(\frac{C_{n,k_{1}}- (n-1)\mathbb{I}_{k_{1}=2}-\mu_{k_1}}{\sqrt{2k_{1}}}, \ldots, \frac{C_{n,k_{l}}-\mu_{k_{l}}}{\sqrt{2k_{l}}}\right)\exp\left\{ \frac{\beta J}{n}\left(\sum \sigma_{i}\right)^2\right\} \frac{d\mathbb{Q}_{n,\sigma}'}{d\mathbb{P}_{n}} d\mathbb{P}_{n}\\
 &= \frac{1}{\tau_{n}} \frac{1}{2^n}\sum_{\sigma \in \{ -1,+1 \}^{n}} \exp\left\{ \frac{\beta J}{n}\left(\sum \sigma_{i}\right)^2\right\} F_{n}(\sigma) \\
 &= \frac{1}{\tau_{n}} \E_{\Psi_{n}} \left[ \exp\left\{ \frac{\beta J}{n}\left(\sum \sigma_{i}\right)^2\right\} F_{n}(\sigma) \right]
\end{split}
\end{equation}
Here $F_{n}(\sigma)=  \int_{\Omega_{n}} f\left(\frac{C_{n,k_{1}}- (n-1)\mathbb{I}_{k_{1}=2}-\mu_{k_1}}{\sqrt{2k_{1}}}, \ldots, \frac{C_{n,k_{l}}-\mu_{k_{l}}}{\sqrt{2k_{l}}}\right) \frac{d\mathbb{Q}_{n,\sigma}'}{d\mathbb{P}_{n}} d\mathbb{P}_{n}.$ From \eqref{res:qnsig} of Proposition \ref{prop:signdistr}, we know that under the measure $\Psi_{n}(\cdot)$, $F_{n}(\sigma) \stackrel{p}{\to} \E \left[ f(Z_{k_{1}},\ldots, Z_{k_{l}}) \right]$.
Now from central limit theorem, 
\begin{equation}
\frac{1}{n}\left(\sum \sigma_{i}\right)^2 \stackrel{d}{\to} Y 
\end{equation}
where $Y$ is a Chi-squared random variable with $1$ degree of freedom.
So by Slutsky's theorem we have under the measure $\Psi_{n}$
\begin{equation}
F_{n}(\sigma)\exp\left\{ \frac{\beta J}{n}\left(\sum \sigma_{i}\right)^2\right\} \stackrel{d}{\to} \E \left[ f(Z_{k_{1}},\ldots, Z_{k_{l}}) \right]\exp\left\{ \beta J Y\right\}.
\end{equation}
 Further, from Hoeffding's inequality we also have when $\beta J <\frac{1}{2}$, the sequence $\exp\left\{ \frac{\beta J}{n}\left(\sum \sigma_{i}\right)^2\right\}$ is uniformly integrable. Since the random variables $F_{n}(\sigma)$'s are uniformly bounded, the sequence $ F_{n}(\sigma)\exp\left\{ \frac{\beta J}{n}\left(\sum \sigma_{i}\right)^2\right\} $ is also uniformly integrable. As a consequence, 
 \begin{equation}
 \E_{\Psi_{n}} \left[ \exp\left\{ \frac{\beta J}{n}\left(\sum \sigma_{i}\right)^2\right\} F_{n}(\sigma) \right] \to \E \left[ f(Z_{k_{1}},\ldots, Z_{k_{l}}) \right] \frac{1}{\sqrt{1- 2\beta J}}.
 \end{equation}  
   
\end{proof}
\begin{remark}
Along with Proposition \ref{prop:norcont}, Proposition \ref{prop:signdistrnew} is another important Result to prove Theorem \ref{thm:asymptotic}. In particular, Proposition \ref{prop:signdistrnew} allows us to verify condition $(iii)$ of Proposition \ref{prop:norcont} in the context of Theorem \ref{thm:asymptotic}.
One might observe that in proof of Proposition \ref{prop:signdistrnew} there are two important facts. First of all, part (2) of Proposition \ref{prop:signdistr} where one proves the asymptotic normality of the signed cycles holds with same parameter for almost all $\sigma$'s. This makes $F_{n}(\sigma)$ converge to $\E \left[ f(Z_{k_{1}},\ldots, Z_{k_{l}}) \right]$ for almost all $\sigma$'s. Secondly, it is also important that the partition function of the Curie-Weiss model in high temperature has a limit $\frac{1}{\sqrt{1-2\beta J}}$. Hence $\frac{1}{\tau_{n}}$ cancels out the $\frac{1}{\sqrt{1-2\beta J}}$ factor giving us the needed result.
\end{remark}
\section{Proof of Theorem \ref{thm:asymptotic}}
As mentioned in subsection \ref{subsec:meas}, we at first prove a central limit theorem for $\log\left(  \frac{d\mathbb{Q}_{n}}{d\mathbb{P}_{n}}\right)\left| \mathbb{P}_{n} \right.$ and finally proving $\log\left( \frac{d\mathbb{Q}_{n}}{d\mathbb{P}_{n}} \right)\left| \mathbb{P}_{n} \right.$ is asymptotically independent of $C_{n,1}$. The main idea is to use Proposition \ref{prop:norcont} to a class of measure $\tilde{\mathbb{Q}}_{n}$ which is close to $\mathbb{Q}_{n}$ in total variation distance. We now give a formal proof of Theorem \ref{thm:asymptotic}.\\
\textbf{Proof of Theorem \ref{thm:asymptotic}:}\\
We at first prove the central limit theorem for $\log\left(\frac{d\mathbb{Q}_{n}}{d\mathbb{P}_{n}}\right)\left| \mathbb{P}_{n} \right.$. The proof is broken into two steps as follows.

\noindent 
\textbf{Step 1 (Construction of the measure $\tilde{\mathbb{Q}}_{n}$) :}
To begin with we shall consider a set $\Omega(\sigma)_{n} \subset \{ -1,+1 \}^{n}$ such that $\Psi_{n}\left(\Omega(\sigma)_{n}\right)\to 1$. The precise definition of $\Omega(\sigma)_{n}$ will be provided later. Now we consider the measure $\tilde{\mathbb{Q}}_{n}$ as follows
\begin{equation}
\tilde{\mathbb{Q}}_{n}= \frac{1}{\E_{\Psi_{n}}\left[ \mathbb{I}_{\Omega(\sigma)_{n}} \exp\left\{ \frac{\beta J}{n}\left( \sum_{i=1}^{n}\sigma_{i} \right)^2\right\}  \right]}\sum_{\sigma \in \Omega(\sigma)_{n}} \frac{1}{2^n}\mathbb{Q}_{n,\sigma}= \frac{1}{\tilde{\tau}_{n}}\sum_{\sigma \in \Omega(\sigma)_{n}} \frac{1}{2^n}\mathbb{Q}_{n,\sigma}
\end{equation} 
where we define 
\begin{equation}
\tilde{\tau}_{n}:= \E_{\Psi_{n}}\left[ \mathbb{I}_{\Omega(\sigma)_{n}} \exp\left\{ \frac{\beta J}{n}\left( \sum_{i=1}^{n}\sigma_{i} \right)^2\right\}  \right].
\end{equation}
Since the sequence of random variables $\exp\left\{ \frac{\beta J}{n}\left( \sum_{i=1}^{n}\sigma_{i} \right)^2\right\}$ is uniformly integrable it follows that for any sequence of sets $\Omega_{n}(\sigma)$ such that $\Psi_{n}\left[ \Omega_{n}(\sigma) \right] \to 1,$
\begin{equation}
\tilde{\tau}_{n} \to \frac{1}{\sqrt{1-2\beta J}}.
\end{equation}
Now we prove the sequences of measures $\mathbb{Q}_{n}$ and $\tilde{\mathbb{Q}}_{n}$ are close in the total variation sense. Let $A_{n}\in \mathcal{F}_{n}$ be a sequence of measurable sets. We have 
\begin{equation}\label{eq:tvcal}
\begin{split}
&\left|\mathbb{Q}_{n}(A_{n})- \tilde{\mathbb{Q}}_{n}(A_{n})  \right|\\
&=\left| \frac{1}{\tau_{n}}\sum_{\sigma \in \{ -1,+1 \}^n} \frac{1}{2^n} \int_{A_{n}}\frac{d\mathbb{Q}_{n,\sigma}}{d\mathbb{P}_{n}}d\mathbb{P}_{n}- \frac{1}{\tilde{\tau}_{n}} \sum_{\sigma \in \Omega_{n}(\sigma) }\frac{1}{2^n} \int_{A_{n}}\frac{d\mathbb{Q}_{n,\sigma}}{d\mathbb{P}_{n}}d\mathbb{P}_{n}\right|\\
&\le \left| \frac{1}{\tau_{n}} \sum_{\sigma \in \Omega_{n}(\sigma)^{c}} \frac{1}{2^n} \int_{A_{n}}\frac{d\mathbb{Q}_{n,\sigma}}{d\mathbb{P}_{n}}d\mathbb{P}_{n}  \right| + \left|\left( \frac{1}{\tau_{n}} -\frac{1}{\tilde{\tau}_{n}}\right)\sum_{\sigma \in \Omega_{n}(\sigma)}\frac{1}{2^n} \int_{A_{n}} \frac{d\mathbb{Q}_{n,\sigma}}{d\mathbb{P}_{n}}d\mathbb{P}_{n}\right|\\
&\le  \left| \frac{1}{\tau_{n}} \sum_{\sigma \in \Omega_{n}(\sigma)^{c}} \frac{1}{2^n} \int_{\Omega_{n}}\frac{d\mathbb{Q}_{n,\sigma}}{d\mathbb{P}_{n}}d\mathbb{P}_{n}  \right| + \left| \left( \frac{1}{\tau_{n}} -\frac{1}{\tilde{\tau}_{n}}\right) \right|\left| \sum_{\sigma \in \Omega_{n}(\sigma)}\frac{1}{2^n} \int_{\Omega_{n}} \frac{d\mathbb{Q}_{n,\sigma}}{d\mathbb{P}_{n}}d\mathbb{P}_{n} \right|\\
& \le \left|\frac{1}{\tau_{n}} \E_{\Psi_{n}}\left[ \mathbb{I}_{\Omega(\sigma)_{n}^{c}} \exp\left\{ \frac{\beta J}{n}\left( \sum_{i=1}^{n}\sigma_{i} \right)^2\right\} \right] \right|+ \left| \left( \frac{1}{\tau_{n}} -\frac{1}{\tilde{\tau}_{n}}\right) \right|\E_{\Psi_{n}}\left[ \mathbb{I}_{\Omega(\sigma)_{n}} \exp\left\{ \frac{\beta J}{n}\left( \sum_{i=1}^{n}\sigma_{i} \right)^2\right\} \right]
\end{split}
\end{equation}
Observe that the final expression in \eqref{eq:tvcal} does not depend on the set $A_{n}$ and also it has been argued earlier that the final expression in \eqref{eq:tvcal} converges to $0$.  
By Proposition \ref{prop:signdistrnew}, under the measure $\tilde{\mathbb{Q}}_{n}$ the random variables 
for any $2 \le k_{1}< k_{2}\ldots < k_{l} = o\left( \sqrt{\log(n)} \right)$ with $l$ fixed,  
\begin{equation}
\left( \frac{C_{n,k_{1}}- (n-1)\mathbb{I}_{k_{1}=2}-\mu_{k_1}}{\sqrt{2k_{1}}}, \ldots, \frac{C_{n,k_{l}}-\mu_{k_{l}}}{\sqrt{2k_{l}}} \right) \stackrel{d}{\to} N_{l}(0, I_{l}).
\end{equation}
Now we prove that $\limsup_{n \to \infty}\E_{\mathbb{P}_{n}}\left[\left(\frac{d\tilde{\mathbb{Q}}_{n}}{d\mathbb{P}_{n}}\right)^2\right]\le \exp\left\{ \sum_{k=2}^{\infty} \frac{\mu_{k}^2}{\sigma_{k}^2} \right\}$ where $\mu_{k}= (2\beta)^{k}$. This will allow us to use Proposition \ref{prop:norcont} for $\tilde{\mathbb{Q}}_{n}$. In particular, we shall get $\left( \frac{d\tilde{\mathbb{Q}}_{n}}{d\mathbb{P}_{n}} \right)\left|\mathbb{P}_{n}\right.$ has a normal limiting distribution. Once this is done, the limiting distribution of $\frac{d\mathbb{Q}_{n}}{d\mathbb{P}_{n}}\left| \mathbb{P}_{n} \right.$ can be derived by the following arguments which proves $$\frac{d\mathbb{Q}_{n}}{d\mathbb{P}_{n}}- \frac{d\tilde{\mathbb{Q}}_{n}}{d\mathbb{P}_{n}}\left|\mathbb{P}_{n} \right. \stackrel{p}{\to}  0.$$

\noindent 
Since both $\tau_{n}$ and $\tilde{\tau}_{n}$ have the same finite limit, the random variable  $$\tilde{Y}_{n}:=\frac{\tilde{\tau_{n}}}{\tau_{n}}\frac{d\tilde{\mathbb{Q}}_{n}}{d\mathbb{P}_{n}}\left|\mathbb{P}_{n}  \right.$$ has the same limiting distribution as $\frac{d\tilde{\mathbb{Q}}_{n}}{d\mathbb{P}_{n}}\left| \mathbb{P}_{n} \right.$. In particular, 
\begin{equation}
\left(\tilde{Y}_{n}- \frac{d\tilde{\mathbb{Q}}_{n}}{d\mathbb{P}_{n}}\right)\left| \mathbb{P}_{n} \right. \stackrel{p}{\to} 0.
\end{equation}
So it is enough to prove $$\left(\frac{d\mathbb{Q}_{n}}{d\mathbb{P}_{n}}- \tilde{Y}_{n} \right)\left|\mathbb{P}_{n}\right. \stackrel{p}{\to} 0.$$ However, 
\begin{equation}
\begin{split}
0\le \frac{d\mathbb{Q}_{n}}{d\mathbb{P}_{n}}- \tilde{Y}_{n} &= \frac{1}{\tau_{n}}\left(\sum_{\sigma \in \Omega_{n}(\sigma)^{c}}\frac{1}{2^n} \frac{d\mathbb{Q}_{n,\sigma}}{d\mathbb{P}_{n}}\right)\\
\Rightarrow  \E_{\mathbb{P}_{n}}\left[\frac{d\mathbb{Q}_{n}}{d\mathbb{P}_{n}}- \tilde{Y}_{n}  \right] &= \frac{1}{\tau_{n}}\E_{\Psi_{n}}\left[ \mathbb{I}_{\Omega_{n}(\sigma)^{c}}\exp\left\{ \frac{\beta J}{n}\left( \sum_{i=1}^{n}\sigma_{i} \right)^2\right\}  \right]\to 0.
\end{split}
\end{equation}
This completes the proof of 
\begin{equation}
\left(\frac{d\mathbb{Q}_{n}}{d\mathbb{P}_{n}}- \frac{d\tilde{\mathbb{Q}}_{n}}{d\mathbb{P}_{n}}\right)\left| \mathbb{P}_{n} \right. \stackrel{p}{\to} 0.
\end{equation}
\textbf{Step 2 \bigg(Upper bounding $\E_{\mathbb{P}_{n}}\left[ \left( \frac{d\tilde{\mathbb{Q}}_{n}}{d\mathbb{P}_{n}} \right)^2 \right]$ \bigg ):}

\noindent 
We know that 
\begin{equation}\label{eq:loglikemombound}
\begin{split}
&\left( \frac{d\tilde{\mathbb{Q}}_{n}}{d\mathbb{P}_{n}} \right)^2 = \left(\frac{1}{\tilde{\tau_{n}}}\right)^2\frac{1}{4^n}\sum_{\sigma \in \Omega(\sigma)_{n}} \sum_{\sigma' \in \Omega(\sigma)_{n}}\frac{d\mathbb{Q}_{n,\sigma}}{d\mathbb{P}_{n}}\frac{d\mathbb{Q}_{n,\sigma'}}{d\mathbb{P}_{n}}\\
&= \left(\frac{1}{\tilde{\tau_{n}}}\right)^2\frac{1}{4^n}\sum_{\sigma \in \Omega(\sigma)_{n}} \sum_{\sigma' \in \Omega(\sigma)_{n}}\exp\left\{ \sum_{i<j} \left(\frac{2\beta}{\sqrt{n}}A_{i,j}\left( \sigma_{i}\sigma_{j} + \sigma_{i}' \sigma_{j}'\right)- \frac{4\beta^2}{n}\right) + \frac{\beta J}{n}\left( \sum_{i=1}^{n}\sigma_{i} \right)^2 + \frac{\beta J}{n}\left( \sum_{i=1}^{n}\sigma'_{i} \right)^2 \right\}\\
& \Rightarrow \E_{\mathbb{P}_{n}}\left[ \left( \frac{d\tilde{\mathbb{Q}}_{n}}{d\mathbb{P}_{n}} \right)^2  \right]\\
&= \left(\frac{1}{\tilde{\tau_{n}}}\right)^2\frac{1}{4^n}\sum_{\sigma \in \Omega(\sigma)_{n}} \sum_{\sigma' \in \Omega(\sigma)_{n}}\exp\left\{\sum_{i<j}\left( \frac{2\beta^2}{n}\left( \sigma_{i}\sigma_{j}+ \sigma'_{i}\sigma'_{j} \right)^2- \frac{4\beta^2}{n} \right)+ \frac{\beta J}{n}\left( \sum_{i=1}^{n}\sigma_{i} \right)^2 + \frac{\beta J}{n}\left( \sum_{i=1}^{n}\sigma'_{i} \right)^2 \right\}\\
&= \left(\frac{1}{\tilde{\tau_{n}}}\right)^2\frac{1}{4^n}\sum_{\sigma \in \Omega(\sigma)_{n}} \sum_{\sigma' \in \Omega(\sigma)_{n}}\exp\left\{\sum_{i<j}\left( \frac{4\beta^2}{n} \sigma_{i}\sigma_{j} \sigma'_{i}\sigma'_{j} \right)+ \frac{\beta J}{n}\left( \sum_{i=1}^{n}\sigma_{i} \right)^2 + \frac{\beta J}{n}\left( \sum_{i=1}^{n}\sigma'_{i} \right)^2 \right\}\\
&=  \left(\frac{1}{\tilde{\tau_{n}}}\right)^2\frac{1}{4^n}\sum_{\sigma \in \Omega(\sigma)_{n}} \sum_{\sigma' \in \Omega(\sigma)_{n}}\exp\left\{ \frac{2\beta^2}{n}\left( \sum_{i=1}^{n} \sigma_{i}\sigma'_{i} \right)^2 -2\beta^2\ + \frac{\beta J}{n}\left( \sum_{i=1}^{n}\sigma_{i} \right)^2 + \frac{\beta J}{n}\left( \sum_{i=1}^{n}\sigma'_{i} \right)^2  \right\}\\
&= \exp\left\{ -2\beta^2 \right\}\left(\frac{1}{\tilde{\tau_{n}}}\right)^2 \E_{\Psi_{n}\otimes \Psi_{n}}\left[ \mathbb{I}_{\sigma \in \Omega(\sigma)_{n}} \mathbb{I}_{\sigma' \in \Omega(\sigma)_{n}}\exp\left\{ \frac{2\beta^2}{n}\left( \sum_{i=1}^{n} \sigma_{i}\sigma'_{i} \right)^2  + \frac{\beta J}{n}\left( \sum_{i=1}^{n}\sigma_{i} \right)^2 + \frac{\beta J}{n}\left( \sum_{i=1}^{n}\sigma'_{i} \right)^2 \right\} \right]
\end{split}
\end{equation}
Here $\Psi_{n}\otimes \Psi_{n}$ denote the two fold product of the uniform probability measure on $\{-1 , 1\}^{n} \times \{ -1, 1 \}^{n}$.

Observe that the random variable 
\begin{equation}\label{def:exp}
\mathbb{I}_{\sigma \in \Omega(\sigma)_{n}} \mathbb{I}_{\sigma' \in \Omega(\sigma)_{n}}\exp\left\{ \frac{2\beta^2}{n}\left( \sum_{i=1}^{n} \sigma_{i}\sigma'_{i} \right)^2  + \frac{\beta J}{n}\left( \sum_{i=1}^{n}\sigma_{i} \right)^2 + \frac{\beta J}{n}\left( \sum_{i=1}^{n}\sigma'_{i} \right)^2 \right\} \stackrel{d}{\to} \exp\left\{ 2\beta^2 Y_{1} + \beta J Y_{2} + \beta J Y_{3}  \right\}
\end{equation}
where $Y_{1},Y_{2},Y_{3}$ are three independent chi-square random variables each with one degree of freedom. Our target is to prove the random variable in the l.h.s. of \eqref{def:exp} is uniformly integrable.
\noindent 
This done by proving 
\begin{equation}
\limsup_{n \to \infty}\E_{\Psi_{n}\otimes \Psi_{n}}\left[ \mathbb{I}_{\sigma \in \Omega(\sigma)_{n}} \mathbb{I}_{\sigma' \in \Omega(\sigma)_{n}}\exp\left\{(1+\eta) \left(\frac{2\beta^2}{n}\left( \sum_{i=1}^{n} \sigma_{i}\sigma'_{i} \right)^2  + \frac{\beta J}{n}\left( \sum_{i=1}^{n}\sigma_{i} \right)^2 + \frac{\beta J}{n}\left( \sum_{i=1}^{n}\sigma'_{i} \right)^2 \right\}\right) \right]<\infty
\end{equation} 
for sufficiently small $\eta$.
We at first write
\begin{equation}
\begin{split}
&=\E_{\Psi_{n}\otimes \Psi_{n}}\left[ \mathbb{I}_{\sigma \in \Omega(\sigma)_{n}} \mathbb{I}_{\sigma' \in \Omega(\sigma)_{n}}\exp\left\{ (1+\eta)\left(\frac{2\beta^2}{n}\left( \sum_{i=1}^{n} \sigma_{i}\sigma'_{i} \right)^2  + \frac{\beta J}{n}\left( \sum_{i=1}^{n}\sigma_{i} \right)^2 + \frac{\beta J}{n}\left( \sum_{i=1}^{n}\sigma'_{i} \right)^2\right) \right\} \right]\\
&= \E\left[\E\left[ \mathbb{I}_{\sigma \in \Omega(\sigma)_{n}} \mathbb{I}_{\sigma' \in \Omega(\sigma)_{n}}\exp\left\{ (1+\eta)\left(\frac{2\beta^2}{n}\left( \sum_{i=1}^{n} \sigma_{i}\sigma'_{i} \right)^2  + \frac{\beta J}{n}\left( \sum_{i=1}^{n}\sigma_{i} \right)^2 + \frac{\beta J}{n}\left( \sum_{i=1}^{n}\sigma'_{i} \right)^2 \right)\right\} \left| \sigma \right.\right]\right]\\
&= \E\left[ \mathbb{I}_{\sigma \in \Omega(\sigma)_{n}} \exp\left\{ (1+\eta)\frac{\beta J}{n}\left( \sum_{i=1}^{n}\sigma_{i} \right)^2\right\}\E\left[ \mathbb{I}_{\sigma' \in \Omega(\sigma)_{n}}\exp\left\{(1+\eta)\left(\frac{2\beta^2}{n}\left( \sum_{i=1}^{n} \sigma_{i}\sigma'_{i} \right)^2 + \frac{\beta J}{n}\left( \sum_{i=1}^{n}\sigma'_{i} \right)^2\right)  \right\}\left| \sigma \right. \right]\right]\\
&= \E\left[ \mathbb{I}_{\sigma \in \Omega(\sigma)_{n}} \exp\left\{ (1+\eta)\frac{\beta J}{n}\left( \sum_{i=1}^{n}\sigma_{i} \right)^2 \right\}\E\left[ \mathbb{I}_{\sigma' \in \Omega(\sigma)_{n}}\exp\left\{(1+\eta)\frac{1}{n}\left(\sigma'\right)^{\mathrm{T}} A'A \left(\sigma'\right)  \right\}\left| \sigma \right. \right] \right]\\
&\le \E\left[ \mathbb{I}_{\sigma \in \Omega(\sigma)_{n}} \exp\left\{ (1+\eta)\frac{\beta J}{n}\left( \sum_{i=1}^{n}\sigma_{i} \right)^2 \right\}\E\left[ \exp\left\{(1+\eta)\frac{1}{n}\left(\sigma'\right)^{\mathrm{T}} A^{\mathrm{T}}A \left(\sigma'\right)  \right\}\left| \sigma \right. \right] \right]. 
\end{split}
\end{equation}
Here $\mathrm{T}$ denotes the transpose of a matrix  
and the matrix $A_{2\times n}$ is given by 
\begin{equation}
A=\left(
\begin{array}{llll}
\beta J& \beta J & \ldots & \beta J\\
2\beta^2\sigma_{1}& 2\beta^2\sigma_{2} & \ldots & 2\beta^2\sigma_{n}
\end{array}
\right).
\end{equation}
Since $\E\left[\exp\left\{ \alpha^{\mathrm{T}}\sigma' \right\}\right]\le \exp\left\{ \frac{1}{2} || \alpha ||^{2} \right\}$ for any $\alpha \in \mathbb{R}^{n}$, we have the following tail estimate by Theorem 1 and Remark 1 of \citet{HSU}:
\begin{equation}
\mathbb{P}\left[ \frac{1}{n}\left(\sigma'\right)^{\mathrm{T}} A^{\mathrm{T}}A \left(\sigma'\right)\ge  \mathrm{tr}(\Sigma) +2 \sqrt{\mathrm{tr}(\Sigma^2)t} + 2 || \Sigma ||t\left| \sigma \right. \right]\le e^{-t}
\end{equation}
where $\Sigma= \frac{1}{\sqrt{n}}A $. Observe that the nonzero eigenvalues of $\Sigma$ are same as the nonzero eigenvalues of $\frac{1}{n}AA^{\mathrm{T}}$. Now 
\begin{equation}  
\frac{1}{n}AA^{\mathrm{T}}=\left( 
\begin{array}{ll}
\beta J& 2\beta^3J \left( \frac{1}{n}\sum_{i=1}^{n}\sigma_{i} \right)\\
2\beta^3J\left( \frac{1}{n} \sum_{i=1}^{n}\sigma_{i} \right) & 2\beta^2
\end{array}
\right).
\end{equation}
We now choose the set $$\Omega(\sigma)_{n}:= \left\{ \frac{1}{n}\sum_{i=1}^{n}\sigma_{i} \le \delta_{n}\right\}$$ for some $\delta_{n} \to 0$ as $n \to \infty$. The existence of such $\Omega(\sigma)_{n}$ is ensured by weak law of large numbers. Now by Weyl's interlacing inequality, we have the eigenvalues of $\frac{1}{n}AA^{\mathrm{T}}$ are given by $\left\{\beta J+ O(\delta_{n}), 2\beta^2 +O(\delta_{n}) \right\} $. Also note that on $\Omega(\sigma)_{n}$, $\mathrm{tr}(\Sigma)$ and $\mathrm{tr}(\Sigma^2)$ remain uniformly bounded. So given any $\epsilon>0$ we can find a $t_{0}$ large enough such that 
\begin{equation}
\mathrm{tr}(\Sigma) +2 \sqrt{\mathrm{tr}(\Sigma^2)t} < \epsilon 2 || \Sigma ||t
\end{equation} 
for all $t>t_{0}.$ As a consequence, for all $t>t_{0}$
\begin{equation}
\begin{split}
&\mathbb{P}\left[ \frac{1}{n}\left(\sigma'\right)^{\mathrm{T}} A^{\mathrm{T}}A \left(\sigma'\right)\ge (1+\epsilon) 2|| \Sigma ||t\left| \sigma \right. \right]\\
&\le \mathbb{P}\left[ \frac{1}{n}\left(\sigma'\right)^{\mathrm{T}} A^{\mathrm{T}}A \left(\sigma'\right)\ge  \mathrm{tr}(\Sigma) +2 \sqrt{\mathrm{tr}(\Sigma^2)t} + 2 || \Sigma ||t\left| \sigma \right. \right]< e^{-t}\\
& \Rightarrow \mathbb{P}\left[ (1+\eta)\frac{1}{n}\left(\sigma'\right)^{\mathrm{T}} A^{\mathrm{T}}A \left(\sigma'\right) \ge \log(t) \left| \sigma \right.\right]\le t^\frac{-1}{2(1+\epsilon)(1+\eta)||\Sigma||} ~ \forall ~ t> \tilde{t}_{0}.
\end{split}
\end{equation}
where $\tilde{t}_{0}$ is another deterministic constant. Here the last step comes from replacing $(1+\eta)(1+\epsilon) 2|| \Sigma ||t$ by $\log(t)$. Since $\max\left\{ \beta J, 2 \beta^2 \right\}< \frac{1}{2}$, we can choose $\epsilon $ and $\eta$ small enough such that  
\begin{equation}
\frac{1}{2(1+\epsilon)(1+\eta)||\Sigma||} >\alpha_{0}>1.
\end{equation}
As a consequence, 
\begin{equation}\label{eqn:argjoint}
\begin{split}
\mathbb{I}_{\sigma \in \Omega(\sigma)_{n}}\E\left[ \exp\left\{(1+\eta)\frac{1}{n}\left(\sigma'\right)^{\mathrm{T}} A^{\mathrm{T}}A \left(\sigma'\right)  \right\}\left| \sigma \right. \right]\le \tilde{t}_{0} + \int_{t > \tilde{t}_{0}} \frac{1}{t^{\alpha_0}} dt= \tilde{t}_{0} + \frac{1}{\alpha_0-1} \frac{1}{t^{\alpha_{0}-1}}
\end{split}
\end{equation} 
On the other hand we can choose $\eta$ small enough such that $\beta J (1+\eta) < \gamma_{0}< \frac{1}{2}$. Now it is enough to prove that 
\begin{equation}
\limsup\E\left[ \exp\left\{ (1+\eta)\frac{\beta J}{n}\left( \sum_{i=1}^{n}\sigma_{i} \right)^2\right\} \right]<\infty.
\end{equation}
However we know that for any $t>0$,
\begin{equation}
\begin{split}
&\E\left[ \exp\left\{  \frac{t}{\sqrt{n}} \sum_{i=1}^{n} \sigma_{i}\right\} \right]\le \exp\left\{ \frac{t^2}{2} \right\}\\
& \Rightarrow
\mathbb{P}\left[ \left|\frac{1}{\sqrt{n}} \sum_{i=1}^{n} \sigma_{i} \right| >t \right] = 2 \mathbb{P}\left[ \frac{1}{\sqrt{n}} \sum_{i=1}^{n} \sigma_{i} > t \right]=2\mathbb{P}\left[ \exp \left\{ \frac{t}{\sqrt{n}} \sum_{i=1}^{n} \sigma_{i}\right\} > \exp\left\{ t^2 \right\}\right]\le 2 \exp\left\{ -\frac{t^2}{2} \right\}\\
\end{split}
\end{equation}
Here the last inequality is a straight forward application of Markov's inequality. Now 
\begin{equation}
\begin{split}
\mathbb{P}\left[ \exp\left\{\frac{\beta J(1+\eta)}{n} \left( \sum_{i=1}^{n} \sigma_{i} \right)^2  \right\} >t \right]&= \mathbb{P}\left[ \frac{\beta J (1+\eta)}{n}\left(  \sum_{i=1}^{n} \sigma_{i}\right)^2 > \log (t) \right]\\
= \mathbb{P}\left[ \left| \frac{1}{\sqrt{n}} \sum_{i=1}^{n} \sigma_{i}\right| > \sqrt{\frac{\log t}{\beta J (1+\eta)}} \right]&\le 2\exp\left\{ - \frac{\log t}{2 \beta J (1+\eta)} \right\} \le 2 \left( \frac{1}{t} \right)^{\frac{1}{2 \beta J (1+\eta)}}< 2 \left( \frac{1}{t} \right)^{\frac{1}{2\gamma_{0}}}.
\end{split}
\end{equation}
Observe that $\frac{1}{2\gamma_{0}}>1$.
Hence by argument similar to \eqref{eqn:argjoint} we have 
\begin{equation}
\limsup \E\left[ \exp\left\{ (1+\eta)\frac{\beta J}{n}\left( \sum_{i=1}^{n}\sigma_{i} \right)^2\right\} \right]< \infty.
\end{equation}
This completes the proof of uniform integrability of the random variable in the l.h.s. of \eqref{def:exp}.
As a consequence, 
\begin{equation}
\begin{split}
&\lim_{n\to \infty}\E\left[\mathbb{I}_{\sigma \in \Omega(\sigma)_{n}} \mathbb{I}_{\sigma' \in \Omega(\sigma)_{n}}\exp\left\{ \frac{2\beta^2}{n}\left( \sum_{i=1}^{n} \sigma_{i}\sigma'_{i} \right)^2  + \frac{\beta J}{n}\left( \sum_{i=1}^{n}\sigma_{i} \right)^2 + \frac{\beta J}{n}\left( \sum_{i=1}^{n}\sigma'_{i} \right)^2 \right\}\right]\\
& = \E\left[ 2\beta^2 Y_{1} + \beta J Y_{2} + \beta J Y_{3} \right]= \frac{1}{\sqrt{1-4\beta^2}}\frac{1}{1-2\beta J}.
\end{split}
\end{equation}
Plugging this into \eqref{eq:loglikemombound} we have 
\begin{equation}
\begin{split}
\lim_{n \to \infty}\E_{\mathbb{P}_{n}}\left[ \left( \frac{d\tilde{\mathbb{Q}}_{n}}{d\mathbb{P}_{n}} \right)^2  \right] &= \exp\left\{-2\beta^2\right\} \left( 1- 2\beta J \right)\frac{1}{\sqrt{1-4\beta^2}}\frac{1}{1-2\beta J}\\
&=  \exp\left\{-2\beta^2\right\} \frac{1}{\sqrt{1-4\beta^2}}\\
&= \exp \left\{ -2 \beta^2 \right\} \exp \left\{ -\frac{1}{2} \log\left(1-4\beta^2\right) \right\}\\
&= \exp \left\{ -2 \beta^2 \right\} \exp \left\{ \frac{1}{2} \sum_{k=1}^{\infty} \frac{\left(4\beta^2\right)^{k}}{k} \right\}= \exp\left\{ \sum_{k=2}^{\infty} \frac{\mu_{k}^2}{2k} \right\}
\end{split}
\end{equation}
where $\mu_{k}=(2\beta)^{k}$.
Now using Proposition \ref{prop:norcont} with $W_{n,k}= C_{n,k+1}-(n-1)\mathbb{I}_{k=1}$, we have for the sequences of measures $\tilde{\mathbb{Q}}_{n}$ and $\mathbb{P}_{n}$
\begin{equation}
\frac{d\mathbb{\tilde{Q}}_{n}}{d\mathbb{P}_{n}}\left|\mathbb{P}_{n} \right. \stackrel{d}{\to} \exp\left\{ \sum_{k=1}^{\infty} \frac{2\mu_{k+1} Z_{k}-\mu_{k+1}^2}{4(k+1)}  \right\}
\end{equation}
where $Z_{k}\sim N\left(0,2(k+1)\right)$.
Hence 
\begin{equation}
\frac{d\mathbb{\tilde{Q}}_{n}}{d\mathbb{P}_{n}}\left|\mathbb{P}_{n} \right. \stackrel{d}{\to} \exp\left\{ \sum_{k=1}^{\infty} \frac{2\mu_{k+1} Z_{k}-\mu_{k+1}^2}{4(k+1)}  \right\}.
\end{equation}
 As we have proved earlier that $\frac{d\mathbb{\tilde{Q}}_{n}}{d\mathbb{P}_{n}}- \frac{d\mathbb{Q}_{n}}{d\mathbb{P}_{n}} \left| \mathbb{P}_{n} \right. \stackrel{p}{\to} 0$, this completes the proof of the asymptotic normality of $\log\left( \frac{d\mathbb{Q}_{n}}{d\mathbb{P}_{n}} \right)\left| \mathbb{P}_{n}. \right.$

 \noindent 
 \textbf{Proof of part (2) of Theorem \ref{thm:asymptotic}:}
 Before proving part $(1)$ of Theorem \ref{thm:asymptotic}, we prove part $(2)$. Since 
 \begin{equation}
 \frac{d\mathbb{Q}_{n}}{d\mathbb{P}_{n}}
= \frac{1}{\tau_{n}} \exp\left\{-(n-1)\beta^2 + \beta J\right\} \exp\left\{ - \frac{\beta}{\sqrt{n}} \sum_{i=1}^{n} A_{i,i}  -\beta J'\right\}Z_{n}(\beta),
 \end{equation} 
 in order to prove part $(2)$ of Theorem \ref{thm:asymptotic}, we need to prove that 
 \begin{equation}\label{eqn:toprove}
 \log \left( \frac{d\mathbb{Q}_{n}}{d\mathbb{P}_{n}} \right)- \sum_{k=2}^{m_{n}} \frac{2\mu_{k} \left( C_{n,k}- (n-1)\mathbb{I}_{k=2} \right)-\mu_{k}^2}{4k} \left| \mathbb{P}_{n} \right. \stackrel{p}{\to} 0.
 \end{equation}
 We at first prove the result analogous to \eqref{eqn:toprove} for $\log\left(\frac{d\tilde{\mathbb{Q}}_{n}}{d\mathbb{P}_{n}}\right)$. \eqref{eqn:toprove} then follows from the fact that $\frac{d\mathbb{Q}_{n}}{d\mathbb{P}_{n}} - \frac{d\tilde{\mathbb{Q}}_{n}}{d\mathbb{P}_{n}}\left| \mathbb{P}_{n} \right.\stackrel{p}{\to} 0$ and an application of continuous mapping theorem.
 
By \eqref{eq:janson-decomp},
for any given $\epsilon,\delta >0$ there exists $K=K(\epsilon,\delta)$ and for any subsequence $n_l$ there exists a further subsequence $n_{l_q}$ such that 
\begin{equation}\label{bdd:liketoseriesIII}
\mathbb{P}_{n_{l_q}}\left( \left| \log(\frac{d\tilde{\mathbb{Q}}_{n_{l_{q}}}}{d\mathbb{P}_{n_{l_{q}}}}) - 
\sum_{k=2}^{K} \frac{2\mu_{k}(C_{n_{l_q},k}-(n-1)\mathbb{I}_{k=2})-\mu_{k}^{2}}{4k} \right|\ge \frac{\epsilon}{2} \right) \le  \frac{\delta}{2}.
\end{equation} 
Now choose $K'\ge K$ such that 
\begin{equation}
\sum_{K'+1}^{\infty}\frac{\mu_{k}^{2}}{2k}\le \max\left\{\frac{\delta\epsilon^{2}}{100},\frac{\epsilon}{100}\right\}.
\end{equation}
For any $K'< k_{1}<k_{2}<m_{n}= o(\sqrt{\log n})$, the proof of Proposition \ref{prop:signdistr} implies that $\E_{\mathbb{P}_{n}}\left[ C_{n,k_{1}} \right]=0$, $\Cov(C_{n,k_{1}},C_{n,k_{2}})=0$ and $\Var(C_{n,k_{i}})= 2k_{i}(1+ O({k_{i}^{2}}/{n}))$ for $i \in \{1,2 \}.$
So 
\begin{equation}
\Var\left(\sum_{k=K'+1}^{m_{n_{l_q}}} \frac{2\mu_{k} C_{n_{l_q},k}-\mu_{k}^{2}}{4k}\right)
= 
(1+o(1))\sum_{k=K'+1}^{m_{n_{l_q}}}\frac{\mu_{k}^{2}}{2k}\le \frac{\delta\epsilon^{2}}{100}.
\end{equation}
Now for large values of $n_{l_q}$,
\begin{equation}
	\label{bdd:series}
\begin{split}
&\mathbb{P}_{n_{l_q}}\left( \left|\sum_{k=K+1}^{m_{n_{l_q}}} \frac{2\mu_{k}C_{n_{l_q},k}}{4k}\right| \ge \frac{\epsilon}{4}\right) \le \frac{16 \delta\epsilon^{2}}{100\epsilon^{2}},
\qquad \mbox{and so} \\
& \mathbb{P}_{n_{l_q}} \left( \left|\sum_{k=K+1}^{m_{n_{l_q}}} \frac{2\mu_{k} C_{n_{l_q},k} -\mu_{k}^{2}}{4k}  \right| \ge \frac{\epsilon}{4}+ \frac{\epsilon}{100}  \right)
\le\frac{16 \delta\epsilon^{2}}{100\epsilon^{2}}.
\end{split}
\end{equation}

\noindent Plugging in the estimates of \eqref{bdd:liketoseriesIII} and \eqref{bdd:series} we have for all large values of $n_{l_q}$,
\begin{equation}
	\label{eq:subseq-conv}
\mathbb{P}_{n_{l_q}}\left(  \left| \log\left(\frac{d\tilde{\mathbb{Q}}_{n_{l_{q}}}}{d\mathbb{P}_{n_{l_{q}}}}\right) - \sum_{k=1}^{m_{n_{l_q}}} \frac{2\mu_{k}(C_{n_{l_q},k}- (n-1)\mathbb{I}_{k=2})-\mu_{k}^{2}}{4k}
\right| \ge \epsilon \right)
\le \delta.
\end{equation}
Since \eqref{eq:subseq-conv} occurs to any subsequence and any $(\epsilon,\delta)$ pair, this completes the proof.

\noindent 
\textbf{Proof of part (1) of Theorem \ref{thm:asymptotic}:} 
Consider the random variable 
\begin{equation}
M:= W+ \sum_{k=1}^{\infty}\frac{2\mu_{k+1}Z_{k}-\mu_{k+1}^2}{4(k+1)}
\end{equation} 
where $W\sim N(0, \beta^2)$ and is independent of the random variable 
\begin{equation}
\sum_{k=1}^{\infty}\frac{2\mu_{k+1}Z_{k}-\mu_{k+1}^2}{4(k+1)}. 
\end{equation} 
Observe that from the proof of part $(2)$ we have 
\begin{equation}
\begin{split}
&\log\left(Z_{n}(\beta)\right) + \frac{1}{2} \log\left(1- 2\beta J\right)- (n-1)\beta^2 + \beta (J  -J') - \beta C_{n,1}\\
& ~~~~~~~~~~ - \sum_{k=2}^{m_{n}}\frac{2\mu_{k}\left(C_{n,k}- (n-1)\mathbb{I}_{k=2}\right)- \mu_{k}^2}{4k}\left| \mathbb{P}_{n} \right.\stackrel{p}{\to} 0.
\end{split}
\end{equation}
So it is enough to prove that 
\begin{equation}
\beta C_{n,1} + \sum_{k=2}^{m_{n}}\frac{2\mu_{k}\left(C_{n,k}- (n-1)\mathbb{I}_{k=2}\right)- \mu_{k}^2}{4k} \stackrel{d}{\to} N\left( \beta^2+\frac{1}{4}\log(1-4\beta^2), -\beta^2 - \frac{1}{2}\log(1-4\beta^2) \right).
\end{equation}
On the other hand for any fixed $K$, 
\begin{equation}
\beta C_{n,1} + \sum_{k=2}^{K}\frac{2\mu_{k}\left(C_{n,k}- (n-1)\mathbb{I}_{k=2}\right)- \mu_{k}^2}{4k}\left| \mathbb{P}_{n} \right.\stackrel{d}{\to} W + \sum_{k=1}^{K-1}\frac{2\mu_{k+1}Z_{k}-\mu_{k+1}^2}{4(k+1)}.
\end{equation} 
Since all the random variables $\beta C_{n,1}$, $\sum_{k=2}^{m_{n}}\frac{2\mu_{k}\left(C_{n,k}- (n-1)\mathbb{I}_{k=2}\right)- \mu_{k}^2}{4k}$ and $\sum_{k=2}^{K}\frac{2\mu_{k}\left(C_{n,k}- (n-1)\mathbb{I}_{k=2}\right)- \mu_{k}^2}{4k}$ have uniformly bounded second moments, they are tight. Hence we have any of their linear combination is also tight. Hence given any subsequence $n_{l} $ there exists a further subsequence $n_{l_{q}}$ such that 
\begin{equation}
\beta C_{n_{l_{q}},1} + \sum_{k=2}^{m_{n_{l_{q}}}}\frac{2\mu_{k}\left(C_{n_{l_{q}},k}- (n_{l_{q}}-1)\mathbb{I}_{k=2}\right)- \mu_{k}^2}{4k} \left| \mathbb{P}_{n_{l_q}} \right. \stackrel{d}{\to} M\{ n_{l_{q}} \}.
\end{equation}
Here the notation $M\{ n_{l_{q}} \}$ means that the limiting distribution might possibly depend on the choice of the sub-sequence.
On the other hand for every fixed $K$ there is a further subsequence $n_{{l_{q}}_{m}}$ (possibly dependent on $K$) such that 
\begin{equation}
\begin{split}
&\left(\beta C_{n_{{l_{q}}_{m}},1} + \sum_{k=2}^{m_{n_{{l_{q}}_{m}}}}\frac{2\mu_{k}\left(C_{{n_{l_{q}}}_{m},k}- (n_{{l_{q}}_{m}}-1)\mathbb{I}_{k=2}\right)- \mu_{k}^2}{4k},\beta C_{n_{{l_{q}}_{m}},1} + \sum_{k=2}^{K}\frac{2\mu_{k}\left(C_{{n_{l_{q}}}_{m},k}- (n_{{l_{q}}_{m}}-1)\mathbb{I}_{k=2}\right)- \mu_{k}^2}{4k}\right)\left| \mathbb{P}_{n_{{l_{q}}_{m}}}\right.\\
&~~~~~~~~~~~~~~~~~~~~~~~~~~~~~~~~~~~~~~~~~~~~~~~~~~~~~~~~~~~~~~~~~~~~~~~~~~~~~~~~~~~~~~~~~\stackrel{d}{\to}\left( M_{1},M_{2,K}  \right).
\end{split}
\end{equation}
where $M_{1}\stackrel{d}{=}M\{  n_{l_{q}}\}$ and $M_{2,K}\stackrel{d}{=} W+ \sum_{k=1}^{K-1}\frac{2\mu_{k+1}Z_{k}-\mu_{k+1}^2}{4(k+1)}$.
Hence
\begin{equation}
\sum_{k=K+1}^{m_{n_{{l_{q}}_{m}}}}\frac{2\mu_{k}\left(C_{n_{{l_{q}}_{m}},k}- (n_{{l_{q}}_{m}}-1)\mathbb{I}_{k=2}\right)- \mu_{k}^2}{4k} \left| \mathbb{P}_{n_{{l_{q}}_{m}}} \right. \stackrel{d}{\to} M_{1}-M_{2,K}.
\end{equation}
On the other hand by Fatou's lemma,
\begin{equation}\label{balchal}
\liminf\E_{\mathbb{P}_{n_{{l_{q}}_{m}}}}\left[\left( \sum_{k=K+1}^{m_{n}}\frac{2\mu_{k}\left(C_{n_{{l_{q}}_{m}},k}- (n-1)\mathbb{I}_{k=2}\right)- \mu_{k}^2}{4k} \right)^2\right] \ge  \E\left[(M_{1}-M_{2,K})^2\right].
\end{equation}
We know that for large enough value of $n_{{l_{q}}_{m}}$,
\begin{equation}
\begin{split}
&\E_{\mathbb{P}_{n_{{l_{q}}_{m}}}}\left[\left( \sum_{k=K+1}^{m_{n_{{l_{q}}_{m}}}}\frac{2\mu_{k}\left(C_{n_{{l_{q}}_{m}},k}- (n-1)\mathbb{I}_{k=2}\right)- \mu_{k}^2}{4k} \right)^2\right] = \Var\left( \sum_{k=K+1}^{m_{n_{{l_{q}}_{m}}}} \frac{2\mu_{k}\left(C_{n_{{l_{q}}_{m}},k}- (n-1)\mathbb{I}_{k=2}\right)}{4k}\right) + \left(\sum_{k=K+1}^{m_{n_{{l_{q}}_{m}}}} \frac{\mu_{k}^2}{4k}\right)^2\\
&= (1+o(1))\sum_{k=K+1}^{m_{n_{{l_{q}}_{m}}}} \frac{\mu_{k}^2}{2k} + \left(\sum_{k=K+1}^{m_{n_{{l_{q}}_{m}}}} \frac{\mu_{k}^2}{4k}\right)^2.
\end{split}
\end{equation}
Here we have used the identity $\E[X^2]= \Var[X]+ \left(\E[X]\right)^2$.
Given any $\epsilon >0$,
we now choose $K$ large enough so that 
$
\sum_{k=K+1}^{\infty} \frac{\mu_{k}^2}{2k} \le \epsilon,
$
implying $\E\left[ (M_{1}-M_{2,K})^2\ \right]\le \epsilon + \epsilon^2/4$.
Hence 
the r.h.s. of \eqref{balchal} converges to $0$ as $K \to \infty$. This implies $W_{2}\left( F^{M_{1}}, F^{M_{2,K}} \right)\to 0$ as $K \to \infty$. Here $F^{M_{1}}$ and $F^{M_{2,K}}$ denote the distribution functions of $M_{1}$ and $M_{2,K}$ respectively. As a consequence, we have 
\begin{equation}
W+ \sum_{k=1}^{K-1}\frac{2\mu_{k+1}Z_{k}-\mu_{k+1}^2}{4(k+1)} \stackrel{d}{\to} M\{n_{l_{q}}\}.
\end{equation}
Hence $M\{ n_{l_{q}} \}\stackrel{d}{=} W+ \sum_{k=1}^{\infty}\frac{2\mu_{k+1}Z_{k}-\mu_{k+1}^2}{4(k+1)} $ which does not depend on the specific choice of the subsequence $\{ n_{l_{q}} \}$. 
This concludes the proof. 
\hfill{$\square$}\\

\appendix
We now give proofs of  Theorem \ref{Thm:approximation} and Proposition \ref{prop:signdistr}
\section{Proof of Theorem \ref{Thm:approximation}}\label{sec:thmapp}
Although the proof of Theorem \ref{Thm:approximation} is similar to the proof of Theorem 3.4 in \citet{Banerjee2017}, we sketch the main details here for the shake of completeness. In order to complete the proof we first need some preliminary notations, definitions and some results. All these definitions can be found in \citet{Banerjee2017}. One might have a look at Section A. \\
The proof of Theorem \ref{Thm:approximation} is divided into two parts depending upon $k$ being even or odd. We analyze each case separately. The case $k$ odd is almost similar to the case considered in \citet{Banerjee2017}. However the case $k$  even is easier here than the case considered in \citet{Banerjee2017}. This due to the fact that when $k$ is even there are words with $l(w)=2k+1$ such that $G_{w}$ is a tree. This creates additional complications. We discuss these things in details now.\\
As a general discussion, the fundamental idea is to show that $\Tr\left[ \left( \frac{1}{\sqrt{n}} \right)^{2k+1}\tilde{A}^{2k+1} \right]$ can be written as a linear combination of $C_{n,r}$'s where $r$ is odd. In particular, one shows that the mean of $\Tr\left[ \left( \frac{1}{\sqrt{n}} \right)^{2k+1}\tilde{A}^{2k+1} \right]$ is approximately $0$ and the main contribution to the variance comes from a class of closed words which corresponds to some multiple of the $C_{n,r}$'s. One can show that the contribution by all the other words are negligible. The Chebyshev polynomial approximation is obtained by inverting this relation. On the other hand when $k$ is even, $\Tr\left[\Tr\left[ \left( \frac{1}{\sqrt{n}} \right)^{2k}\tilde{A}^{2k} \right]\right]$ can be written as a linear combination of $C_{n,r}$'s plus an additional term which has two components. The first component is approximately equal to $k\psi_{2k}\left[ \frac{1}{n}\Tr[\tilde{A}_{n}^{2}]- \E\left[ \frac{1}{n}\Tr[\tilde{A}_{n}^{2}] \right] \right]$ and the second component gives a non-trivial contribution to the mean keeping the variance unchanged. These terms correspond to words in $\mathcal{W}_{1,2k}$ and $\mathcal{W}_{3,2k}$ in the proof. Finally the Chebyshev polynomial approximation still holds due to the fact 
\begin{equation}
\sum_{r=1}^{k} P_{2k}[2r]r\psi_{2r}=0.
\end{equation}    
This cancels out the term corresponding to $k\psi_{2k}\left[ \frac{1}{n}\Tr[\tilde{A}_{n}^{2}]- \E\left[ \frac{1}{n}\Tr[\tilde{A}_{n}^{2}] \right] \right]$.

\noindent
We now elaborate the above discussion in some details. 
\begin{enumerate}[(i)]
\item \textbf{$k$ is odd:}In this part to avoid confusion we shall use the notation $2k+1$ instead of using the terminology $k$ being odd. The proof is completed by first showing that $\Tr\left[ \left( \frac{1}{\sqrt{n}} \right)^{2k+1}\tilde{A}^{2k+1} \right]$ can be approximated as a linear combination of the cycles $C_{n,i}$'s. Then one inverts the relation to get cycles as a linear combination of the traces. \\
We at first write down the following identity:
\begin{equation}
\begin{split}
\Tr\left[ \left( \frac{1}{\sqrt{n}} \right)^{2k+1}\tilde{A}^{2k+1} \right]& = \left( \frac{1}{n} \right)^{\frac{2k+1}{2}}\sum_{w~|~ \text{closed} ~ l(w)=2k+2} X_{w}
\end{split}
\end{equation}
where the graph $G_{w}$ has no self loops and for any closed word $w=(i_{0},i_{1},\ldots,i_{2k},i_{0})$ we define 
\begin{equation}
X_{w}:= A_{i_{0},i_{1}}\ldots A_{i_{2k},i_{0}}.
\end{equation}
By a direct use of the parity principle (Lemma A.1 in \citet{Banerjee2017}) we get that $G_{w}$ can not be a tree.
We now divide the class of words with $l(w)=2k+2$ in to two parts. In the first part
  we consider the cases when $G_{w}$ is an unicyclic graph such that each edge in the bracelet is repeated exactly once in particular $w \in \mathfrak{W}_{2k+2,r,t}$ for some $r$ odd and $2t -r= m$ (to be denoted by $\mathfrak{W}_{2k+2,r}$) and we denote the complement by $\mathcal{W}_{1,2k+1}$. In particular, we write 
\begin{equation}\label{eq:deom}
\begin{split}
 &\left( \frac{1}{n} \right)^{\frac{2k+1}{2}}\sum_{w~|~ \text{closed} ~ l(w)=2k+2} X_{w}\\
 & = \left( \frac{1}{n} \right)^{\frac{2k+1}{2}}\sum_{r=3}^{2k+1}\sum_{w \in \mathfrak{W}_{2k+2,r,t}} X_{w}+  \left( \frac{1}{n} \right)^{\frac{2k+1}{2}} \sum_{w \in \mathcal{W}_{1,2k+1}} X_{w}.
\end{split}
\end{equation} 
Our fundamental goal is to show that
\begin{equation}
\E\left[\left(\left( \frac{1}{n} \right)^{\frac{2k+1}{2}} \sum_{w \in \mathcal{W}_{1,2k+1}} X_{w}\right)^2\right] \to 0
\end{equation} and 
\begin{equation}\label{eq:cyctotr}
\E\left[ \left(\left( \frac{1}{n} \right)^{\frac{2k+1}{2}}\sum_{r=3~|~ r ~ \text{odd}}^{2k+1}\sum_{w \in \mathfrak{W}_{2k+2,r}} X_{w} - \sum_{r=3~|~ r~ \text{odd}}^{2k+1}f(2k+1,r)\frac{2k+1}{r}C_{n,r}\right)^2 \right]\to 0.
\end{equation}
We at first analyze the second part of \eqref{eq:deom}. 
\begin{equation}
\begin{split}
&\E\left[\left(\left( \frac{1}{n} \right)^{\frac{2k+1}{2}} \sum_{w \in \mathcal{W}_{1,2k+1}} X_{w}\right)^2\right]\\
&=\left( \frac{1}{n} \right)^{2k+1} \sum_{w, x \in \mathcal{W}_{1,2k+1}} \E[X_{w}X_{x}].
\end{split}
\end{equation}
Now $\E[X_{w}X_{x}]=0$ unless all the edges in $a=[w,x]$ are repeated at least twice. Observe that by Holder's inequality and sub Gaussianity we have $\E[\left| X_{w} X_{x}\right|]\le \left|A_{1,2}\right|^{4k+2}\le \left( C_{1}k \right)^{C_{2}k}$. Here $C_{1}$ and $C_{2}$ are two deterministic constants. We divide this case into two further sub cases. First of all $w$ and $x$  shares an edge in particular $a$ is an weak CLT sentence If $\#V_{a}=t$ by Lemma A.5 in \citet{Banerjee2017}, the number of such $a$ is bounded by $2^{4k+4}\left( C_{3}(4k+4) \right)^{C_{4}}\left( 4k+4 \right)^{3(4k+4-2t)} n^{t}$. By Proposition A.6 in \citet{Banerjee2017}, $w,x \in \mathcal{W}_{1,2k+1}$, $a$ can not be a CLT word pair. Hence $\#V_{a} < 2k+1$.  Now consider the case when $w$ and $x$ don't share an edge. In this case $\#V_{w}\le \#E_{w}< \frac{2k+1}{2}$ and for any $t_{1}$ such that $\#V_{w}=t_{1}$ we have by Lemma A.4 in \citet{Banerjee2017} we have the cardinality of such $w$ is bounded by $2^{2k+1}(2k+1)^{3(2k-2t_{1}+3)}n^{t_{1}}$. Similarly the number of words $x$ such that for any $t_{2}$ such that $\#V_{x}=t_{2}$ is bounded by $2^{2k+1}(2k+1)^{3(2k-2t_{2}+3)}n^{t_{2}}$. As a consequence,
\begin{equation}
\begin{split}
&\E\left[\left(\left( \frac{1}{n} \right)^{\frac{2k+1}{2}} \sum_{w \in \mathcal{W}_{1,2k+1}} X_{w}\right)^2\right]\\
&\le \left( C_{1}k \right)^{C_{2}k}\left[\sum_{t=1}^{2k} 2^{4k+4}\left( C_{3}(4k+4) \right)^{C_{4}}\left( 4k+4 \right)^{3(4k+4-2t)}\left(\frac{1}{n}\right)^{2k+1-t}\right.\\
&~~~~~\left.+ \sum_{t_{1}=1}^{k}\sum_{t_{2}=1}^{k}2^{4k+2}(2k+1)^{3(4k-2t_{2}-2t_{1}+6)}\left(\frac{1}{n}\right)^{2k+1-t_{1}-t_{2}}\right]\\
&~~~~ \to 0
\end{split}
\end{equation}
whenever $k=o\left( \sqrt{\log n} \right)$.  Hence we can neglect the second term in \eqref{eq:deom}. Now we prove \ref{eq:cyctotr}.
\\
First of all for any $w \in \mathfrak{W}_{2k+2,r}$, $\E[X_{w}]=0$ and if $w_{1} \in \mathfrak{W}_{2k+2,r_{1}}$ and $w_{2} \in \mathfrak{W}_{2k+2,r_{2}}$ where $r_{1}\neq r_{2}$ $\E[X_{w_{1}}X_{w_{2}}]=0$. Hence 
\begin{equation}
\begin{split}
&\E\left[\left(\left( \frac{1}{n} \right)^{\frac{2k+1}{2}}\sum_{r=3~|~ r ~ \text{odd}}^{2k+1}\sum_{w \in \mathfrak{W}_{2k+2,r}} X_{w}\right)^2\right]= \sum_{r=3~|~ r ~ \text{odd}}\Var\left[ \left( \frac{1}{n} \right)^{\frac{2k+1}{2}}\sum_{w \in \mathfrak{W}_{2k+2,r}} X_{w} \right]\\
&= \left(1+O\left( \frac{k^2}{n} \right) \right)\sum_{r=3~|~ r ~ \text{odd}} f^{2}(2k+1,r)\frac{2(2k+1)^{2}}{r}.
\end{split}
\end{equation}
Here we have used Proposition A.6 in \citet{Banerjee2017} and Lemma A.10 in \citet{Banerjee2017}. On the other hand by proof of Proposition \ref{prop:signdistr} we have 
\begin{equation}
\begin{split}
&\E\left[  \left(\sum_{r=3~|~ r~ \text{odd}}^{2k+1}f(2k+1,r)\frac{2k+1}{r}C_{n,r}\right)^2 \right]\\
&= \left(1+O\left( \frac{k^2}{n} \right) \right)\sum_{r=3~|~ r ~ \text{odd}} f^{2}(2k+1,r)\frac{2(2k+1)^2}{r}.
\end{split}
\end{equation}
Now 
\begin{equation}
\begin{split}
&\E\left[ \left(\left( \frac{1}{n} \right)^{\frac{2k+1}{2}}\sum_{r=3~|~ r ~ \text{odd}}^{2k+1}\sum_{w \in \mathfrak{W}_{2k+2,r}} X_{w}\right) \left( \sum_{r=3~|~ r~ \text{odd}}^{2k+1}f(2k+1,r)\frac{2k+1}{r}C_{n,r} \right) \right]\\
&= \sum_{r=3~|~ r ~ \text{odd}}f(2k+1,r)\frac{2k+1}{r}\Cov\left[ \left( \frac{1}{n} \right)^{\frac{2k+1}{2}}\sum_{w \in \mathfrak{W}_{2k+2,r}} X_{w},C_{n,r} \right]\\
&= \sum_{r=3~|~ r ~ \text{odd}}\left( 1+ O\left( \frac{k^2}{n} \right) \right) f(2k+1,r)\frac{2k+1}{r} 2f(2k+1,r) \frac{2k+1}{r}\\
&= \sum_{r=3~|~ r ~ \text{odd}}\left( 1+ O\left( \frac{k^2}{n} \right) \right) f^{2}(2k+1,r)\frac{2(2k+1)^2}{r}
\end{split}
\end{equation}
by Lemma A.10 in \citet{Banerjee2017}. As a consequence,
\begin{equation}
\begin{split}
&\E\left[ \left(\left( \frac{1}{n} \right)^{\frac{2k+1}{2}}\sum_{r=3~|~ r ~ \text{odd}}^{2k+1}\sum_{w \in \mathfrak{W}_{2k+2,r}} X_{w} - \sum_{r=3~|~ r~ \text{odd}}^{2k+1}f(2k+1,r)\frac{2k+1}{r}C_{n,r}\right)^2 \right]\\
&= \sum_{r=3~|~ r \text{odd}} O\left( \frac{k^2}{n} \right)f^{2}(2k+1,r)\frac{2(2k+1)^2}{r}\\
&= O\left(\frac{\text{Poly}(k)2^{4k}}{n}\right)\to 0.
\end{split}
\end{equation}
Here $\text{Poly}(k)$ is a deterministic polynomial in $k$. It is known that for example see \citet{Lang} that $f(m,r)\frac{m}{r}=\binom{m}{\frac{m+r}{2}}$ when $m$ and $r$ have same parity. By comparing coefficient it can be checked that 
\begin{equation}
\left(
\begin{array}{lllll}
1&0 &\ldots & 0 & 0\\
\binom{3}{2}&1&\ldots&0& 0\\
\vdots & \ddots & \ldots & 1& 0\\
\binom{2k+1}{k+1} & \binom{2k+1}{k+2} & \ldots & \binom{2k+1}{2k} & 1
\end{array}
\right)^{-1}= D.
\end{equation}
Here $D$ is a lower triangular matrix and $D_{i,j}= P_{2i+1}[2j+1]$ where $P_{j}[i]$ is the coefficient of $z^{i}$ in $\left( z + \frac{1}{z} \right)^{j}$. Now 
\begin{equation}\label{eq:quadbound}
\begin{split}
&\E\left[\left(C_{n,2k+1}- \Tr\left[ P_{2k+1}\left[\frac{1}{\sqrt{n}}\tilde{A}\right] \right]\right)^2\right]\\
& \le \sum_{r=3~|~ r ~ \text{odd}}^{2k+1}\E\left[ \left(C_{n,2r+1}- \Tr\left[ P_{2r+1}\left[\frac{1}{\sqrt{n}}\tilde{A}\right] \right]\right)^2 \right]\\
& = \E\left[ \zeta'D' D \zeta \right]\\
&\le \sup_{i,j}| D'D_{i,j}| \E\left[ \sum_{i=1}^{k}\zeta_{i}^{2} + 2\sum_{i<j}\left|\zeta_{i}\zeta_{j}\right| \right]\\
&\le  \sup_{i,j}| D'D_{i,j}|k \E\left[ \sum_{i=1}^{k} \zeta_{i}^{2} \right]\to 0.
\end{split}
\end{equation}
Here 
\begin{equation}
\zeta=\left(
\begin{array}{l}
0  \\
\Tr\left[\left(\frac{1}{\sqrt{n}}\right)^{3}\tilde{A}^{3}\right]- \binom{3}{2}C_{n,3}\\
 \vdots\\
 \Tr \left[ \left(\frac{1}{\sqrt{n}}\right)^{2k+1}\tilde{A}^{2k+1} \right]- \sum_{r=3~|~ r ~\text{odd}}^{2k+1} \binom{2k+1}{\frac{2k+1+r}{2}} C_{n,r}
\end{array}
\right)
\end{equation}
and the last step of \eqref{eq:quadbound} follows from the fact that the coefficients of the Chebyshev polynomial grows at most exponentially.
This completes the proof of the odd case. 
\item \textbf{$k$ is even:} Here again to avoid confusion we use the notation $2k$ instead of using the term $k$ even. Like the $2k+1$ case we at first write 
\begin{equation}\label{eq:sumeven}
\begin{split}
& \Tr\left[ \left(\frac{1}{\sqrt{n}}\right)^{2k}\tilde{A}^{2k} \right] = \left( \frac{1}{n} \right)^{k}\sum_{w~|~ \text{closed} ~ l(w)=2k+1} X_{w}.
\end{split}
\end{equation}
However unlike the $2k+1$ case, in this case there are words $w$ such that $G_{w}$ is a tree. Keeping this in mind, we decompose $\Tr\left[ \left(\frac{1}{\sqrt{n}}\right)^{2k}\tilde{A}^{2k} \right]$ in the following way:
\begin{equation}
\begin{split}
\Tr\left[ \left(\frac{1}{\sqrt{n}}\right)^{2k}\tilde{A}^{2k} \right]=  \left( \frac{1}{n} \right)^{k} \left[ \sum_{w \in \mathcal{W}_{1,2k}} X_{w} + \sum_{w \in \mathcal{W}_{2,2k}} X_{w}+ \sum_{w \in \mathcal{W}_{3,2k}}X_{w}+ \sum_{w \in \mathcal{W}_{4,2k}}X_{w}\right]
\end{split}
\end{equation}
where 
\begin{enumerate}
\item $\mathcal{W}_{1,2k}$ is the collection of all Wigner words(See Definition A.3 in \citet{Banerjee2017}). In this case $G_{w}$ is a tree and each is traversed exactly twice.
\item $\mathcal{W}_{2,2k}:= \cup_{r=4~:~ r ~ \text{even}}^{2k}\mathfrak{W}_{2k+1,r}$.
\item $\mathcal{W}_{3,2k}$ is the collection of words such that $G_{w}$ is either an unicyclic graph with all edges repeated exactly twice or $G_{w}$ is a tree with exactly one edge repeated exactly four times and all other edges are repeated exactly twice.
\item $\mathcal{W}_{4,2k}$ is collection of all other words. 
\end{enumerate}
We at first prove that 
\begin{equation}\label{eq:decomsecondeven}
\begin{split}
\left( \frac{1}{n} \right)^{2k}\E\left[\left(\sum_{w \in \mathcal{W}_{4,2k}}X_{w}\right)^2 \right]\to 0.
\end{split}
\end{equation}
Note that 
\begin{equation}
\E\left[\left(\sum_{w \in \mathcal{W}_{4,2k}}X_{w}\right)^2 \right] = \sum_{w,x \in \mathcal{W}_{4,2k}} \E[X_{w}X_{x}].
\end{equation}
Alike the odd case $\E[X_{w}X_{x}]=0$ unless all the edges in the sentence $a=[w,x]$ are repeated at least twice. Now we consider two cases firstly when the sentence $a$ is a weak CLT sentence i.e. the graphs $G_w$ and $G_x$ share an edge. In this case we prove that $\#V_{a}<2k$. We do a case by case analysis here. Since all the edges in $a$ are repeated at least twice $\#E_{a}\le 2k$. Whenever $\#E_{a}=2k$, there are two cases when $\#V_{a}\ge 2k$. Firstly the graph $G_{a}$ is a tree and each edge is repeated exactly twice. This is impossible since otherwise in both the words $w$ and $x$ there will be an edge which are repeated exactly once however both $G_{w}$ and $G_{x}$ are tree. Now consider the other case when $\#E_{a}=\#V_{a}=2k$ in this case $G_{w}$ and $G_{x}$ are both unicyclic graphs with common bracelet. This is impossible by definition of $\mathcal{W}_{4,2k}$. Finally the when $G_{w}$ is a tree and $\#E_{a}=2k-1$  is also impossible since in this case the only possibility is both $w, x \in \mathcal{W}_{1,2k}$. As a consequence $\#V_{a}<2k$.  Now we consider the other case when $G_{w}$ and $G_{x}$ don't share an edge. In this case we have all the edges in both $G_{w}$ and $G_{x}$ are repeated at least twice. However all the words of such type having $\#V_{w}\ge k$ or $\#V_{x}\ge k$ are covered in $W_{1,2k}$, $W_{2,2k}$ and $W_{3,2k}$. Hence both $\#V_{w}$ and $\#V_{x}$ are strictly less than $k$. Now arguments exactly similar to the analysis of the second term of \eqref{eq:deom} proves \eqref{eq:decomsecondeven}. 
By arguments similar to the odd case, it can be proved that 
\begin{equation}
\begin{split}
\E\left[\left(\left( \frac{1}{n} \right)^{k}\sum_{w \in \mathcal{W}_{2,2k}}X_{w} - \sum_{r=4 ~ r ~ even}^{2k} f(2k,r)\frac{2k}{r}C_{n,r}\right)^{2}\right]\le O\left(\frac{\text{Poly}(k)2^{4k}}{n}\right).
\end{split}
\end{equation}
Next we prove that $\left( \frac{1}{n} \right)^{k} \sum_{w \in \mathcal{W}_{1,2k}}X_{w}- \E\left[ \left( \frac{1}{n} \right)^{k} \sum_{w \in \mathcal{W}_{1,2k}}X_{w} \right]$ can be approximated by $k\psi_{2k}\left[ \frac{1}{n}\Tr[\tilde{A}_{n}^{2}]- \E\left[ \frac{1}{n}\Tr[\tilde{A}_{n}^{2}] \right] \right]$. This is done by second moment calculation. Observe that 
\begin{equation}\label{eq:secontr}
\begin{split}
&\E\left[ \left( \left( \frac{1}{n} \right)^{k} \sum_{w \in \mathcal{W}_{1,2k}}X_{w}- \E\left[ \left( \frac{1}{n} \right)^{k} \sum_{w \in \mathcal{W}_{1,2k}}X_{w} \right]- k\psi_{2k}\left[ \frac{1}{n}\Tr[\tilde{A}_{n}^{2}]- \E\left[ \frac{1}{n}\Tr[\tilde{A}_{n}^{2}] \right] \right] \right)^2 \right]\\
&= \Var \left[ \left( \frac{1}{n} \right)^{k} \sum_{w \in \mathcal{W}_{1,2k}}X_{w} \right]-2 k\psi_{2k}\Cov \left[ \left( \frac{1}{n} \right)^{k} \sum_{w \in \mathcal{W}_{1,2k}}X_{w},\frac{1}{n}\Tr[\tilde{A}_{n}^{2}] \right]+ k^{2}\psi_{2k}^{2}\Var\left[ \frac{1}{n}\Tr[\tilde{A}_{n}^{2}]  \right]
\end{split}
\end{equation}
Now we analyze each term separately. First 
\begin{equation}\label{eq:varw1}
\begin{split}
&\Var\left[ \left( \frac{1}{n} \right)^{k} \sum_{w \in \mathcal{W}_{1,2k}}X_{w} \right]\\
&= \left( \frac{1}{n} \right)^{2k} \sum_{w,x \in \mathcal{W}_{1,2k}}\E\left[ \left(X_{w}-\E[X_{w}]\right)\left( X_{x}- \E[X_{x}] \right) \right]\\
&= \left( \frac{1}{n} \right)^{2k} \sum_{w,x\in \mathcal{W}_{1,2k};a=[w,x]~\text{weak CLT sentence}}\E\left[ \left(X_{w}-\E[X_{w}]\right)\left( X_{x}- \E[X_{x}] \right) \right]
\end{split}
\end{equation}
Since $\# \left( E_{w} \cap E_{x} \right)\ge 1$, we have $\#E_{a}\le 2k-1$. Hence $\#V_{a}\le 2k$. The equality $\#V_{a}=2k$ occurs when $w$ and $x$ share exactly one edge. We at first fix a word $w$, then the number of $x$'s such that $\#V_{a}=2k$ holds can be enumerated as follows. In the graph $G_{w}$ there are $k$ distinct edges we chose one of them which shares the edge with the word $x$. After fixing this edge we choose the equivalence class of the word $x$. There are $\psi_{2k}$ many of them and once an equivalence class is fixed, there are $k$ choices for the edge in $G_{x}$ to be shared. Now once this edge is also fixed there are two choices such that this edge is same as the chosen edge in $G_{w}$, one in the same order as the edge in $G_{w}$ and other in the reverse order. Now fixing all these choices there are $n^{k-1}\left( 1+ O \left( \frac{k^2}{n} \right) \right)$ choices for other vertices in $G_{x}$. As a consequence, given $w$ there are $2k^{2}\psi_{2k}n^{k-1}\left( 1+ O \left( \frac{k^2}{n} \right) \right)$ choices of $x$ such that $a$ is a weak CLT sentence and $\#V_{a}=2k$. Finally there are $\psi_{2k}n^{k+1}\left( 1+ O \left( \frac{k^2}{n} \right) \right)$. Combining all these, we have the number of $a$ such that $\#V_{a}=2k$ is given by $2k^{2}\psi_{2k}^{2}n^{2k}\left( 1+ O \left( \frac{k^2}{n} \right) \right)$. In this case $\E\left[ \left(X_{w}-\E[X_{w}]\right)\left( X_{x}- \E[X_{x}] \right) \right]=2$ (by Gaussianity). Finally for all the cases the number of $a$'s such that $\#V_{a}=t$ is bounded by $n^{t}2^{4k+2}\left( C_{1}(4k+2) \right)^{C_{2}}\left( 4k+2 \right)^{3\left( 4k+2 -2t\right)}$. Finally for any $a$, $\E\left[ \left|\left(X_{w}-\E[X_{w}]\right)\left( X_{x} - \E[X_{x}]\right)\right| \right]\le \left( C_{3}k \right)^{C_{4}k}$ for some deterministic constant $C_{3}$ and $C_{4}$. Plugging all these estimates in \eqref{eq:varw1} we have, 
\begin{equation}
\begin{split}
& Var\left[ \left( \frac{1}{n} \right)^{k} \sum_{w \in \mathcal{W}_{1,2k}}X_{w} \right]\\
&= 4k^{2}\psi_{2k}^{2}\left( 1+ O\left( \frac{k^2}{n} \right) \right) + \mathbf{E}
\end{split}
\end{equation}
where 
\begin{equation}
\mathbf{E}\le 2^{4k+2} \left( C_{3}k \right)^{C_{4}k}\sum_{t=1}^{2k-1} \left( C_{1}(4k+2) \right)^{C_{2}}\left( \frac{\left( 4k+2 \right)^{6}}{n} \right)^{2k-t}\to 0
\end{equation}
whenever $k= o\left(  \sqrt{\log n}\right)$. Similar arguments can be used to prove that 
\begin{equation}
\begin{split}
&\Cov \left[ \left( \frac{1}{n} \right)^{k} \sum_{w \in \mathcal{W}_{1,2k}}X_{w},\frac{1}{n}\Tr[\tilde{A}_{n}^{2}] \right]= 4k \psi_{2k}\left( 1+ O\left( \frac{k^2}{n}\right) \right)\\
&\Var\left[ \frac{1}{n}\Tr[\tilde{A}_{n}^{2}]  \right]= 4\left( 1+ O\left( \frac{1}{n} \right) \right).
\end{split}
\end{equation}
Plugging these estimates in \eqref{eq:secontr} we have 
\begin{equation}
\begin{split}
&\E\left[ \left( \left( \frac{1}{n} \right)^{k} \sum_{w \in \mathcal{W}_{1,2k}}X_{w}- \E\left[ \left( \frac{1}{n} \right)^{k} \sum_{w \in \mathcal{W}_{1,2k}}X_{w} \right]- k\psi_{2k}\left[ \frac{1}{n}\Tr[\tilde{A}_{n}^{2}]- \E\left[ \frac{1}{n}\Tr[\tilde{A}_{n}^{2}] \right] \right] \right)^2 \right]\\
&\le  4k^2\psi_{2k}^{2} O \left( \frac{k^2}{n} \right) + \mathbf{E}\to 0.
\end{split}
\end{equation}
Finally for each word $w\in \mathcal{W}_{3,2k}$, we have $\#V_{w}=k$. These words give nontrivial contribution to the mean. However it can be checked that
\begin{equation}
\Var\left[ \left( \frac{1}{n} \right)^{k}\sum_{w \in \mathcal{W}_{3,2k}} X_{w}\right]\le 2^{4k}\frac{\text{Poly}(k)}{n} \to 0.
\end{equation} 
In particular, one also has an explicit expression for the mean 
\begin{equation}
\begin{split}
\E\left[ \left( \frac{1}{n} \right)^{k}\sum_{w \in \mathcal{W}_{3,2k}} X_{w}  \right]= \left( 1+ O\left(  \frac{k^2}{n}\right) \right)\left[\sum_{r=3}^{k} f(2k,2r)\frac{k(r+1)}{r} + 3 f(2k,4)\frac{k}{2}\right].
\end{split}
\end{equation}
Finally the proof for the even case can be completed by following the arguments similar to \eqref{eq:quadbound} and using the fact that 
\begin{equation}\label{eq:cheidentity}
\sum_{r=1}^{k} P_{2k}[2r]r\psi_{2r}=0.
\end{equation}
One might check \citet{Banerjee2017} for a proof of \eqref{eq:cheidentity}. 
\end{enumerate}
\hfill{$\square$}
\section{Proof of Proposition \ref{prop:signdistr}}\label{sec:propsign}
We at first give the proofs of part $(1)$ and $(3)$. The proof of part $(2)$ will be given separately. \\
\textbf{Proof of part $(1)$ and $(3)$:}
We start with a very basic but fundamental observation. Note that for any $k\ge 3$
\begin{equation}
\begin{split}
C_{n,k}& = \left(\frac{1}{n}\right)^{\frac{k}{2}}\sum_{w \in \mathfrak{W}_{k+1,k}} X_{w}.
\end{split}
\end{equation}
It is easy to see that for $k\ge 2$, $\Var\left[C_{n,k}\right]\to 2k$.
The proof is completed by method of moments and Wick's formula. We at first give formal statements of these results:
At first we state the method of moments.
\begin{lemma}\label{lem:mom}
Let $(Y_{n,1},\ldots, Y_{n,l})$ be a sequence of random vectors of $l$ dimension. Then $(Y_{n,1},\ldots, Y_{n,l}) \stackrel{d}{\to} (Z_1,\ldots,Z_{l})$ if the following conditions are satisfied:
\begin{enumerate}[i)]
\item 
\begin{equation}\label{eqn:momcond}
\lim_{n \to \infty}\E[X_{n,1}\ldots X_{n,m}] 
\end{equation}
exists for any fixed $m$ and $X_{n,i} \in \{ Y_{n,1},\ldots,Y_{n,l} \}$ for $1\le i \le m$.
\item(Carleman's Condition)\cite{Carl26}
\begin{equation}
\sum_{h=1}^{\infty} \left(\lim_{n \to \infty}\E[X_{n,i}^{2h}]\right)^{-\frac{1}{2h}} =\infty ~~ \forall ~ 1\le i \le l.
\end{equation}
\end{enumerate} 
Further,  
\begin{equation}
\lim_{n \to \infty}\E[X_{n,1}\ldots X_{n,m}]= \E[X_{1}\ldots X_{m}].
\end{equation}
Here $X_{n,i} \in \{ Y_{n,1},\ldots,Y_{n,l} \}$ for $1\le i \le m$ and $X_{i}$ is the in distribution limit of $X_{n,i}$. In particular, if $X_{n,i}= Y_{n,j}$ for some $j \in \{1,\ldots,l \}$ then $X_{i}= Z_{j}$. 
\end{lemma}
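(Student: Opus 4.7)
The plan is to prove the lemma by combining tightness of the vector sequence with a uniqueness argument based on Carleman's condition, using a subsequential-limit framework rather than a direct moment-to-distribution conversion.

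First I would establish tightness. Applying condition (i) with $m=2$ and $X_{n,1}=X_{n,2}=Y_{n,i}$ shows that $\limsup_n\E[Y_{n,i}^2]<\infty$ for each $i$. By Markov's inequality this implies tightness of each coordinate sequence $\{Y_{n,i}\}_n$, and consequently tightness of the joint laws $\{(Y_{n,1},\ldots,Y_{n,l})\}_n$ on $\mathbb{R}^l$. Hence, by Prokhorov's theorem, every subsequence admits a further subsequence $\{n_{k_j}\}$ along which $(Y_{n_{k_j},1},\ldots,Y_{n_{k_j},l})\stackrel{d}{\to}(Y_1^*,\ldots,Y_l^*)$ for some random vector $(Y_1^*,\ldots,Y_l^*)$.

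Next I would show that the joint moments of any such subsequential limit coincide with those of $(Z_1,\ldots,Z_l)$. Condition (i) gives uniform bounds on all joint moments, so the products $Y_{n_{k_j},i_1}\cdots Y_{n_{k_j},i_m}$ are uniformly integrable. Combined with the weak convergence this yields
\begin{equation*}
\E[Y^*_{i_1}\cdots Y^*_{i_m}]=\lim_{j\to\infty}\E[Y_{n_{k_j},i_1}\cdots Y_{n_{k_j},i_m}]=\E[Z_{i_1}\cdots Z_{i_m}]
\end{equation*}
for every choice of indices and every $m$. In particular each marginal $Y_i^*$ has the same moments as $Z_i$, and condition (ii) is precisely Carleman's condition for the moments of $Z_i$; the classical one-dimensional Carleman theorem then yields $Y_i^*\stackrel{d}{=}Z_i$.

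To upgrade marginal equality to joint equality in distribution, I would invoke a multivariate moment determinacy theorem: if each one-dimensional marginal of a law on $\mathbb{R}^l$ is determined by its moments and the joint moments of all orders coincide, then the two laws themselves coincide. Applying this to $(Y_1^*,\ldots,Y_l^*)$ and $(Z_1,\ldots,Z_l)$ shows that every subsequential weak limit equals $(Z_1,\ldots,Z_l)$ in law, so the full sequence converges jointly: $(Y_{n,1},\ldots,Y_{n,l})\stackrel{d}{\to}(Z_1,\ldots,Z_l)$. The final identity $\lim_n\E[X_{n,1}\cdots X_{n,m}]=\E[X_1\cdots X_m]$ is then immediate from weak convergence combined with the uniform integrability already established. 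The main obstacle is the passage from marginal to joint determinacy: Carleman's condition on each coordinate does not obviously propagate to arbitrary linear combinations, so a naive Cram\'er--Wold argument is not automatic. A multivariate moment determinacy theorem (proved by induction on dimension, reducing to one-dimensional Carleman on suitable marginals) is the right tool, and once it is in place the rest of the argument is routine.
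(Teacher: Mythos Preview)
The paper omits the proof of this lemma entirely (it states ``The method of moments is very well known and much useful in probability theory. We omit its proof.''), so there is no argument in the paper to compare against. Your outline is a correct and standard way to prove the result: tightness from bounded second moments, Prokhorov to extract subsequential limits, uniform integrability of finite products (which you get from condition~(i) applied at level $2m$ together with H\"older) to pass moments to the limit, and then a multivariate moment-determinacy statement to pin down the limit law.

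Your identification of the one genuinely nontrivial step is accurate: the passage from marginal Carleman to joint determinacy is not automatic via Cram\'er--Wold, but it \emph{is} a known theorem---if each one-dimensional marginal of a probability measure on $\mathbb{R}^l$ satisfies Carleman's condition, then the joint law is determined by its mixed moments (see, e.g., L.~C.~Petersen, \emph{Math.~Scand.}~51 (1982), 361--366, or the treatment in Schm\"udgen's book on the moment problem). Once you cite that result, the argument is complete; no induction-on-dimension proof needs to be supplied from scratch.
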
 
The method of moments is very well known and much useful in probability theory. We omit its proof.

\noindent
Now we state the Wick's formula for Gaussian random variables which was first proved by Isserlis(1918)\cite{I18} and later on introduced by \citet{W50} in the physics literature in 1950. 
\begin{lemma}(Wick's formula)\label{lem:wick}\citet{W50}
Let $(Y_1,\ldots, Y_{l})$ be a multivariate mean $0$ random vector of dimension $l$ with covariance matrix $\Sigma$(possibly singular). Then $((Y_1,\ldots, Y_{l}))$ is jointly Gaussian if and only if for any integer $m$ and $X_{i} \in \{ Y_1,\ldots,Y_{l} \}$ for $1\le i \le m$
\begin{equation}\label{eqn:wick}
\E[X_1\ldots X_{m}]=\left\{
\begin{array}{ll}
  \sum_{\eta} \prod_{i=1}^{\frac{m}{2}} \E[X_{\eta(i,1)}X_{\eta(i,2)}] & ~ \text{for $m$ even}\\
  0 & \text{for $m$ odd.}
\end{array}
\right.
\end{equation}
Here $\eta$ is a partition of $\{1,\ldots,m \}$ into $\frac{m}{2}$ blocks such that each block contains exactly $2$ elements and $\eta(i,j)$ denotes the $j$ th element of the $i$ th block of $\eta$ for $j=1,2$.
\end{lemma}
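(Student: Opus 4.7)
The approach is via the moment generating function of a centered Gaussian vector. For $(Y_1,\ldots,Y_l)$ jointly Gaussian with mean zero and covariance $\Sigma$, one has the classical identity
\begin{equation}
\E\left[\exp\left(\sum_{i=1}^l t_i Y_i\right)\right] = \exp\left(\tfrac{1}{2} \sum_{i,j=1}^l t_i t_j \Sigma_{ij}\right)
\end{equation}
valid for all real $t_1,\ldots,t_l$. The plan is to Taylor-expand both sides in the variables $t_i$ and match coefficients; the mixed moments appear on the left and a combinatorial sum over pair partitions appears on the right.

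For the forward direction (Gaussian implies the pair-partition formula) I would expand the left-hand side as $\sum_{m\ge 0}\frac{1}{m!}\E[(\sum_i t_iY_i)^m]$ and reorganize it as $\sum_{m\ge 0}\frac{1}{m!}\sum_{i_1,\ldots,i_m}t_{i_1}\cdots t_{i_m}\E[Y_{i_1}\cdots Y_{i_m}]$. Simultaneously, I would expand the right-hand side as $\sum_{k\ge 0}\frac{1}{2^k k!}\left(\sum_{i,j}t_it_j\Sigma_{ij}\right)^k$ and then expand the $k$-th power multinomially. Equating coefficients of any monomial $t_{i_1}\cdots t_{i_m}$ immediately forces odd monomials to carry coefficient zero, which yields the odd-order vanishing in \eqref{eqn:wick}; for $m=2k$ the identity reduces to a purely combinatorial statement.

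The key combinatorial step, which I expect to be the main obstacle to make rigorous but is standard, is to see that the coefficient of $t_{i_1}\cdots t_{i_{2k}}$ on the right is exactly $\sum_{\eta}\prod_{r=1}^k \Sigma_{i_{\eta(r,1)}i_{\eta(r,2)}}$, summed over all pair partitions $\eta$ of $\{1,\ldots,2k\}$. The normalizing constant $1/(2^k k!)$ absorbs precisely the $k!$ orderings of the $k$ pairs and the $2$ orderings within each pair, leaving an unordered sum over matchings. Matching this against the left-hand side then gives the Wick identity $\E[Y_{i_1}\cdots Y_{i_{2k}}]=\sum_\eta \prod_{r=1}^k \Sigma_{i_{\eta(r,1)}i_{\eta(r,2)}}$.

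For the converse (the pair-partition formula implies Gaussianity) I would invoke the moment problem. The pair-partition identity determines every joint moment of $(Y_1,\ldots,Y_l)$ purely in terms of $\Sigma$, and these moments coincide with those of a centered Gaussian vector of covariance $\Sigma$. Since the Gaussian marginal moments grow like $(2h)!/(2^h h!)$ up to bounded factors, Carleman's condition is satisfied and so the joint distribution is uniquely determined by its moments. Hence $(Y_1,\ldots,Y_l)$ must itself be Gaussian with covariance $\Sigma$, closing the proof.
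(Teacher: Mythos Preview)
Your argument is correct and is the standard textbook proof of Wick's formula: the moment generating function expansion for the forward direction, and moment determinacy (via Carleman) for the converse. There is nothing to compare against here, since the paper explicitly omits the proof of this lemma, treating it as a classical result cited from \citet{W50} and \citet{I18}.
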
 
The proof of aforesaid lemma is also omitted.\\
We at first verify the asymptotic CLT, then variance calculation will be given. Given $1\le k_{1} < k_{2}< \ldots < k_{l}= o\left( \sqrt{\log n} \right)$, we consider the random variables a $(Y_{n,1},\ldots, Y_{n,l})$ as follows: 
\begin{equation}
Y_{n,k_{j}}=\left\{
\begin{array}{ll}
\frac{C_{n,k_{j}}}{\sqrt{2k_{j}}} & \text{if $k_{j}\neq 2$}\\
\frac{C_{n,2}- (n-1)}{2} & \text{otherwise.}
\end{array}
\right.
\end{equation}
Now fix $m$ and take $X_{n,1},\ldots, X_{n,m}\in \{ Y_{n,1},\ldots, Y_{n,l} \}$. Let $l_{1},\ldots,l_{m}$ be the corresponding lengths of the cycles. Now observe that 
\begin{equation}\label{eq:expectm}
\begin{split}
&\E_{\mathbb{P}_{n}}\left[ X_{n,1},\ldots, X_{n,m} \right]\\
&= \left(\frac{1}{n}\right)^{\frac{\sum_{i=1}^{m}l_{i}}{2}}\prod_{i=1}^{m}\left( \frac{1}{\mathbb{I}_{l_{i}\neq 1}\sqrt{2l_{i}}+ \mathbb{I}_{l_{i}=1}} \right) \sum_{a=[w_{1}\ldots w_{m}]~|~ a ~ \text{weak CLT sentence}} \E\left[ \left( X_{w_{1}}-\E[X_{w_{1}}] \right)\ldots \left( X_{w_{l}} - \E[X_{w_{l}}]\right) \right].
\end{split}
\end{equation}
By arguments similar to the arguments given right after \eqref{eq:deom}, we have 
\begin{equation}
\E\left[ \left| \left( X_{w_{1}}-\E[X_{w_{1}}] \right)\ldots \left( X_{w_{l}} - \E[X_{w_{l}}]\right)  \right| \right]\le \left( C_{1}\left( \sum_{i=1}^{m} l_{i} \right) \right)^{C_{2}\left( \sum_{i=1}^{m} l_{i} \right)}.
\end{equation}
By Proposition A.2 in \citet{Banerjee2017}, we have $\#V_{a}< \sum_{i=1}^{m}\frac{l_{i}}{2}$ unless $a$ is a CLT sentence. Further the CLT sentences only exists if $m$ is even. Finally, we prove we can neglect the sum corresponding all the sentences which are not CLT sentences. 
\begin{equation}\label{eq:weakcltbal}
\begin{split}
&\left(\frac{1}{n}\right)^{\frac{\sum_{i=1}^{m}l_{i}}{2}}\prod_{i=1}^{m}\left( \frac{1}{\mathbb{I}_{l_{i}\neq 1}\sqrt{2l_{i}}+ \mathbb{I}_{l_{i}=1}} \right) \sum_{a=[w_{1}\ldots w_{m}]~|~ ~ \#V_{a}<  \sum_{i=1}^{m}\frac{l_{i}}{2}} \E\left[ \left|\left( X_{w_{1}}-\E[X_{w_{1}}] \right)\ldots \left( X_{w_{l}} - \E[X_{w_{l}}]\right)\right| \right]\\
&\le \left( C_{1}\left( \sum_{i=1}^{m} l_{i} \right) \right)^{C_{2}\left( \sum_{i=1}^{m} l_{i} \right)} 2^{\sum_{i=1}^{m}(l_{i}+1)} \left( C_{3} \sum_{i=1}^{m} (l_{i}+1) \right)^{C_{4}m} \sum_{t <  \sum_{i=1}^{m}\frac{l_{i}}{2}}\left( \sum_{i=1}^{m} (\l_{i}+1) \right)^{3 (\sum_{i=1}^{m}(l_{i}+1)-2t)}\left(  \frac{1}{n}\right)^{\sum_{i=1}^{m} \frac{l_{i}}{2}-t} \to 0.
\end{split}
\end{equation}
With \eqref{eq:weakcltbal} in hand, we are only left with CLT sentences. In particular for every $w_{i}$, there exists an unique $w_{j}$ such that $G_{w_{i}}$ shares an edge with $G_{w_{j}}$. On the other hand in these cases in order to get 
\begin{equation}
\E\left[ \left(X_{w_{i}}-\E\left[ X_{w_{j}} \right]\right)\left( X_{w_{j}}-\E\left[ X_{w_{j}} \right] \right) \right]\neq 0,
\end{equation} 
we need $G_{w_{i}}=G_{w_{j}}$. Further the random variables $X_{w_{i}}$ and $X_{w_{j}}$ are mutually independent if $G_{w_{i}}$ and $G_{w_{j}}$ are disjoint. Hence \eqref{eq:expectm} asymptotically satisfies Wick's formula with appropriate variance. This concludes the proof of part $(1)$ and $(3)$.\\
\textbf{Proof of part $(2)$:}
We at first give a proof for $k=2$ case. Observe that under $\mathbb{Q}_{n,\sigma}$,
\begin{equation}
\begin{split}
&\left( \frac{1}{n} \right)\sum_{i,j} A_{i,j}^{2}\\
&= \left( \frac{1}{n} \right) \sum_{i,j}\left( B_{i,j} + \frac{(2\beta)}{\sqrt{n}} \sigma_{i}\sigma_{j} \right)^2\\
&= \left( \frac{1}{n} \right) \sum_{i,j} \left( B_{i,j}^2+ \frac{4\beta \sigma_{i}\sigma_{j}}{\sqrt{n}} B_{i,j} + \frac{4\beta^2}{n} \right)\\
&= \left( \frac{1}{n} \right) \sum_{i,j} B_{i,j}^2 + \frac{4\beta \sigma_{i}\sigma_{j}}{n^{\frac{3}{2}}}\sum_{i,j} B_{i,j} + (1+ o(1)) 4\beta^2.
\end{split}
\end{equation}  
Here $B_{i,j}\sim_{i.i.d.} N(0,1)$. By CLT we have for any $\sigma$,
\begin{equation}
 \left( \frac{4\beta \sigma_{i}\sigma_{j}}{n^{\frac{3}{2}}} \right) \sum_{i,j} B_{i,j} \stackrel{p}{\to} 0.
\end{equation}  
This completes the proof for $k=2$ case.

\noindent
Now we move on to the other cases. Observe that under $\mathbb{Q}_{n,\sigma}$
\begin{equation}\label{eq:expandcyclealt}
\begin{split}
C_{n,k}&= \left( \frac{1}{\sqrt{n}} \right)^{k}\sum_{w \in \mathfrak{W}_{k+1,k}} \prod_{j=0}^{k-1}A_{i_{j},i_{j+1}}\\
&= \left( \frac{1}{\sqrt{n}} \right)^{k}\sum_{w \in \mathfrak{W}_{k+1,k}} \prod_{j=0}^{k-1} \left( B_{i_{j},i_{j+1}} + \frac{2\beta\sigma_{i_{j}}\sigma_{i_{j+1}}}{\sqrt{n}} \right)\\
&= \left( \frac{1}{\sqrt{n}} \right)^{k}\sum_{w \in \mathfrak{W}_{k+1,k}}\prod_{j=0}^{k-1}B_{i_{j},i_{j+1}} + \sum_{w \in \mathfrak{W}_{k+1,k}} V_{n,k,w} + \left( 1+ O \left( \frac{k^2}{n} \right) \right)(2\beta)^{k}. 
\end{split}
\end{equation}
Here $V_{n,k,w}$ is obtained by expanding the product in \eqref{eq:expandcyclealt} and taking all the residual terms in the product. Observe that  $w=(i_{0},i_{1},\ldots,i_{k})$, 
\begin{equation}
\begin{split}
V_{n,k,w}&= \left( \frac{1}{n} \right)^{\frac{k}{2}} \sum_{ \emptyset \subsetneq E_{f}\subsetneq E_{w}} \prod_{e \in E_{f}} \sigma_{e} (\frac{2\beta}{\sqrt{n}}) \prod_{e \in E_{w} \backslash E_{f}} B_{e} 
\end{split}
\end{equation}
Here for any edge $\{i,j\}$, $B_{e}=B_{i,j}$ and $\sigma_{e}={\sigma_{i}\sigma_{j}}$. Observe that for any $\sigma$,  $\E[V_{n,k,w}]=0$. We now prove that 
\begin{equation}
\E\left[\left( \sum_{w \in \mathfrak{W}_{k+1,k}}V_{n,k,w} \right)^{2}\right] \to 0.
\end{equation}
We have that 
\begin{equation}
\begin{split}
& \E\left[\left( \sum_{w \in \mathfrak{W}_{k+1,k}}V_{n,k,w} \right)^{2}\right]\\
&= \left( \frac{1}{n}\right)^{k} \sum_{w,x \in \mathfrak{W}_{k+1,k}} \E\left[ V_{n,k,w} V_{n,k,x}\right]. 
\end{split}
\end{equation}
We now find an upper bound to $\E\left[ V_{n,k,w} V_{n,k,x} \right]$. At first fix any word $w$ and the set $\emptyset \subsetneq E_{f} \subsetneq E_{w}$ and consider all the words $x$ such that $E_{{w}}\cap E_{x}= E_{w} \backslash E_{f}$. As every edge in $G_{w}$ and $G_{x}$ appear exactly once,
\begin{equation}
\begin{split}
& \E\left[ V_{n,k,w} V_{n,k,x} \right]\\
&= \left( \frac{1}{n} \right)^{k} \sum_{E_w\backslash E' \subset E_{w} \backslash E_{f}} \prod_{e \in E'}\left(\pm \frac{4\beta^2}{n}\right)\E\left[\prod_{e \in E_{w}\backslash E'} B_{e}^2\right]\\
&\le \left( \frac{1}{n} \right)^{k} \sum_{E_w\backslash E' \subset E_{w} \backslash E_{f}}\left( \frac{4\beta^2}{n} \right)^{\#E'}\\
&\le \left( \frac{1}{n} \right)^{k+ \#E_{f}}2^{k}.   
\end{split}
\end{equation}
The last inequality holds since $\#E'\ge \#E_{f}$ and  $\#(E_w\backslash E' \subset E_{w} \backslash E_{f})\le 2^{k}$.

Observe that the graph corresponding to the edges $E_{w} \backslash E_{f}$ is a disjoint collection of straight lines. Let the number of such straight lines be $\zeta$. Obviously $\zeta \le \#(E_{w} \backslash E_{f})$.  The number of ways these $\zeta$ components can be placed in $x$ is bounded by $k^{\zeta}\le k^{\#(E_{w} \backslash E_{f})}$ and all other nodes in $x$ can be chosen freely. So there are at most $n^{k-\#V_{E_{w}\backslash E_f}}k^{\#(E_{w} \backslash E_{f})}$ choices of such $x$. Here $V_{E_{w}\backslash E_{f}}$ is the set of vertices of the graph corresponding to $(E_{w} \backslash E_{f})$. Observe that, whenever $k>\#E_{f}>0$, $E_{w}\backslash E_{f}$ is a forest so $$\#V_{E_{w}\backslash E_{f}}\ge \#(E_{w} \backslash E_{f})+1 \Leftrightarrow k-\#V_{E_{w}\backslash E_f} \le \#E_{f}-1.$$
As a consequence, 
\begin{equation}\label{eqn:booboo}
\sum_{x~|~ E_{{w}}\cap E_{x}= E_{w} \backslash E_{f}}\cov(V_{n,k,w}, V_{n,k,x})\le (2)^{k}\frac{1}{n^{k+\#E_{f}}}n^{\#E_{f}-1}k^{\#(E_{w} \backslash E_{f})}\le  (2)^{k} \frac{1}{n^{k+1}}k^{k} .
\end{equation}
The right hand side of \eqref{eqn:booboo} does not depend on $E_{f}$
and there are at most $2^{k}$ nonempty subsets $E_{f}$ of $E^{w}$.
So
\begin{equation}
\sum_{x}\cov(V_{n,k,w}, V_{n,k,x}) \le (4)^{k} k^{k}\frac{1}{n^{k+1}}.
\end{equation} 
Finally there are at most $n^{k}$ many $w$. So
\begin{equation}\label{eqn:final}
\sum_{w}\sum_{x}\cov(V_{n,k,w}, V_{n,k,x})\le (4)^{k}k^{k}\frac{1}{n}.
\end{equation}
Now we use the fact $k=o(\sqrt{\log(n)})$. In this case 
\begin{equation}
k\log(4)+k\log(k)\le \sqrt{log(n)}\log(\sqrt{\log n}) = o(log(n)) \Leftrightarrow (4)^{k}k^{k}=o(n).
\end{equation}
This concludes the proof. \hfill{$\square$}

\noindent 
\textbf{Acknowledgment:} The author acknowledges Prof. Jinho Baik for suggesting this problem to him while he was visiting U Michigan for a summer school. The author also thanks Prof. Wei Kuo Chen and the referees for comments.
\bibliography{PAR_SPI}
\end{document}